\theoremstyle{plain}
	\newtheorem{theorem}{Theorem}[section]
	\newtheorem{lemma}[theorem]{Lemma}
	\newtheorem{proposition}[theorem]{Proposition}
\theoremstyle{plain}
	\newtheorem{maintheorem}{Theorem}
\def\R{\mathbb{R}}
\def\bbS{\mathbb{S}}
\def\calF{\mathcal{F}}
\def\calG{\mathcal{G}}
\def\calH{\mathcal{H}}
\def\e{\varepsilon}
\def\bx{\bar{x}}
\def\by{\bar{y}}
\def\tx{\tilde{x}}
\def\ty{\tilde{y}}
\def\tW{\tilde{W}}
\def\bW{\bar{W}}
\begin{document}

% **********************************************************
% Title of This Paper
% **********************************************************

\title[Nonradial singular solutions]{Infinitely many nonradial singular solutions\\ of $\Delta u+e^u=0$ in $\R^N\backslash\{0\}$, $4\le N\le 10$}
\thanks{This work was supported by JSPS KAKENHI Grant Numbers 24740100, 16K05225.}

\author{Yasuhito Miyamoto}
\address{Graduate School of Mathematical Sciences, The University of Tokyo,
3-8-1 Komaba, Meguro-ku, Tokyo 153-8914, Japan}
\email{miyamoto@ms.u-tokyo.ac.jp}

\begin{abstract}
We construct countably infinitely many nonradial singular solutions of the problem
\[
\Delta u+e^u=0\ \ \textrm{in}\ \ \R^N\backslash\{0\},\ \ 4\le N\le 10
\]
of the form
\[
u(r,\sigma)=-2\log r+\log 2(N-2)+v(\sigma),
\]
where $v(\sigma)$ depends only on $\sigma\in \bbS^{N-1}$.
To this end we construct countably infinitely many solutions of
\[
\Delta_{\bbS^{N-1}}v+2(N-2)(e^v-1)=0,\ \ 4\le N\le 10,
\]
using ODE techniques.
\end{abstract}
\date{\today}
\subjclass[2010]{Primary: 35J61, 35R01; Secondary: 70K05, 35B08}
\keywords{Nonradial singular solutions, Infinitely many solutions, Prescribed curvature equation, Phase plane analysis}
\maketitle

%\begin{keyword}
% keywords here, in the form: keyword \sep keyword
%Global branch \sep Bifurcation \sep Nodal curve \sep Nodal domain\\
% PACS codes here, in the form: \PACS code \sep code
%\PACS 
%{\it 2000 MSC:} 35B32 \sep 35J25 \sep 35P30
%\end{keyword}

%\end{frontmatter}

% **********************************************************
% **********************************************************
% **********************************************************
% Section 1
% **********************************************************
% **********************************************************
% **********************************************************
\section{Introduction and Main results}
We are concerned with the problem
\begin{equation}\label{E}
\Delta U+e^U=0\ \ \textrm{in}\ \ \R^N\backslash\{0\}.
\end{equation}
If $N\ge 3$, then this problem has the radial solution
\begin{equation}\label{SSR}
U^*(R)=-2\log R+\log 2(N-2),
\end{equation}
which is singular at the origin.
The aim of the paper is to construct infinitely many nonradial classical solutions of (\ref{E}) such that solutions are singular only at the origin.
Let $\R_+:=\{ x\in\R;\ x>0\}$, and let $\bbS^{N-1}$ denote the $(N-1)$-dimensional unit sphere.
Let $(R,\sigma)\in\R_+\times\bbS^{N-1}$ be the spherical polar coordinates.
We will find solutions of the form
\begin{equation}\label{SS}
U(R,\sigma)=-2\log R+\log 2(N-2)+V(\sigma)
\end{equation}
such that $V$ is smooth.
The main result of the paper is the following:
\begin{maintheorem}\label{A}
Suppose that $4\le N\le 10$.
The problem (\ref{E}) has countably infinitely many nonradial classical solutions of the form (\ref{SS}) such that these solutions diverge at the origin.
Here, $V(\sigma)$ is nonconstant.
\end{maintheorem}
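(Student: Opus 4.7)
The plan is to substitute the ansatz (1.4) into (1.1), which cancels all $R$-dependence and reduces the problem to the semilinear equation on the sphere
\[
\Delta_{\bbS^{N-1}}V+2(N-2)(e^V-1)=0,
\]
as stated in the abstract. It therefore suffices to construct countably many nonconstant axially symmetric solutions of this equation on $\bbS^{N-1}$.

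Writing $V(\sigma)=v(\theta)$ as a function of the colatitude $\theta\in[0,\pi]$, the PDE reduces to the singular ODE
\[
v''+(N-2)\cot\theta\,v'+2(N-2)(e^v-1)=0,\qquad v'(0)=v'(\pi)=0,
\]
which I would attack by shooting: for each $\alpha\in\R$, let $v(\theta;\alpha)$ denote the unique smooth solution with $v(0)=\alpha$, $v'(0)=0$. The value $\alpha=0$ returns the trivial solution $v\equiv 0$ (corresponding to the radial $U^*$), and I seek countably many $\alpha_k\ne 0$ with $v'(\pi;\alpha_k)=0$. To analyze the shooting function I would pass to an Emden--Fowler type phase plane, for instance via $t=\log\tan(\theta/2)$ combined with a suitable shift of $v$, which sends $(0,\pi)$ onto $\R$ and recasts the ODE as a planar system that is asymptotically autonomous at $t=\pm\infty$. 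The equilibrium $(v,\dot v)=(0,0)$ of the autonomous limit carries a linearization whose characteristic equation has negative discriminant on a range of $N$; the computation should yield the window $4\le N\le 10$ stated in Theorem~A as precisely the regime in which the phase portrait spirals. Tracking the rotation number of the shooting curve $\alpha\mapsto(v(\pi;\alpha),v'(\pi;\alpha))$ and invoking the intermediate value theorem then extracts an infinite sequence $\alpha_k\ne 0$ with $v'(\pi;\alpha_k)=0$.

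The principal obstacle is the non-autonomy introduced by $\cot\theta$, which is singular at the poles $\theta=0,\pi$ and vanishes at the equator, so the autonomous Emden model only serves as an asymptotic reference. One must (a) justify that the shooting map is well-defined and continuous up to $\theta=\pi$, controlling any tendency of $v(\cdot;\alpha)$ to blow up in finite $\theta$; (b) show that the spiraling predicted by the autonomous limit persists as genuine oscillation for the actual non-autonomous flow, which I would handle via a comparison or energy-function argument between the full equation and its limiting autonomous system; and (c) verify that the sequence $\{v_k\}$ indeed consists of infinitely many distinct nonconstant solutions, which can be detected by a Sturm-type zero-counting for the differences $v_k-v_j$ (or for $v_k$ itself around the trivial solution). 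A secondary technicality is pinning down the precise range $4\le N\le 10$ rather than some nearby interval; this should drop out of a careful computation of the discriminant of the linearized characteristic polynomial in the chosen Emden coordinates, and is the main reason the argument is dimension-dependent.
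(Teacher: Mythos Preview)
Your high-level strategy matches the paper's: reduce to (\ref{V}), write $V=v(\theta)$, set up a shooting problem in $\alpha=v(0)$, use the Emden variable $t=\log\tan(\theta/2)$, and exploit that the autonomous limit has a spiral sink precisely when $4\le N\le 10$.  Where your plan diverges from the paper---and where the real work lies---is in how the non-autonomous-to-autonomous comparison is carried out.

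Three concrete points.  First, the paper does not shoot to $\theta=\pi$; it restricts from the start to \emph{symmetric} solutions $v(\theta)=v(\pi-\theta)$ and shoots only to the equator $\theta=\pi/2$, i.e.\ to $t=0$.  This eliminates your obstacle~(a) about controlling the solution near the second pole.  Second, the ``suitable shift of $v$'' must be subtraction of the exact singular solution $v^*(\theta)=-2\log\sin\theta+\kappa_{N-1}$, not a constant; without this the nonlinearity does not become $e^x-1$ and the phase-plane picture does not close up.  Third---and this is the key technical idea you are missing---the paper does not compare the non-autonomous system to its $t\to-\infty$ limit directly.  Instead it introduces the translation $s=t+\alpha/2$ (Lemma~\ref{S3L1}) so that, after rewriting in $r=e^s$, the shooting problem becomes a \emph{regular} perturbation in the small parameter $\delta=e^{-\alpha}$ of a fixed initial-value problem (Lemma~\ref{S3L2}).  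The convergence to the autonomous orbit on $(-\infty,s_0]$ is then established by the implicit function theorem in $C^0[0,r_0]$ (Lemma~\ref{S3L3}), not by an energy or comparison argument as you propose.  Your version of obstacle~(b) is the correct obstacle, but the resolution you sketch is too vague; the scaling-plus-IFT device is what makes the argument go through.

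For obstacle~(c), the paper distinguishes the solutions by the ``half winding number'' $\tilde W_{(-\infty,\alpha/2]}(\alpha)$ (the number of zeros of $\tilde y$), which is finite for each $\alpha$ but tends to infinity as $\alpha\to\infty$; this is close in spirit to your Sturm count.
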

Substituting (\ref{SS}) into (\ref{E}), we see that $V$ satisfies
\begin{equation}\label{V}
\Delta_{\bbS^{N-1}}V+2(N-2)(e^V-1)=0\ \ \textrm{on}\ \ \bbS^{N-1},
\end{equation}
where $\Delta_{\bbS^{N-1}}$ denotes the Laplace-Beltrami operator on $\bbS^{N-1}$.
Theorem~\ref{A} immediately follows from Theorem~\ref{B} below.
\begin{maintheorem}\label{B}
Suppose that $4\le N\le 10$.
The problem (\ref{V}) has countably infinitely many axially symmetric nonconstant classical solutions.
\end{maintheorem}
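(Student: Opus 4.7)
The plan is to reduce to axially symmetric profiles, transform the resulting singular ODE to a near-autonomous planar system via a logarithmic change of variable, and then construct countably many solutions by a shooting/rotation-number argument that exploits the fact that $4\le N\le 10$ is exactly the regime in which the asymptotic linearisation has spiral dynamics.

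Restricting to $V=V(\theta)$ with $\theta\in[0,\pi]$ the polar angle, equation \eref{V} becomes
\[
V''(\theta)+(N-2)\cot\theta\,V'(\theta)+2(N-2)(e^{V(\theta)}-1)=0,\qquad\theta\in(0,\pi),
\]
with the regularity conditions $V'(0)=V'(\pi)=0$ at the two poles. Since the equation is invariant under $\theta\mapsto\pi-\theta$, I would look for solutions even about $\theta=\pi/2$, which reduces the problem to a BVP on $[0,\pi/2]$ with $V'(0)=V'(\pi/2)=0$ and then extends by reflection. For each $\alpha\in\R$, let $V(\,\cdot\,;\alpha)$ denote the unique solution of the singular Cauchy problem $V(0)=\alpha$, $V'(0)=0$ (existence and continuous dependence on $\alpha$ follow from standard desingularisation of the regular singular point $\theta=0$). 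The aim is to exhibit countably many $\alpha_k\ne 0$ for which $V'(\pi/2;\alpha_k)=0$.

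To expose the oscillatory mechanism, I would use the substitution $s=\log\tan(\theta/2)$ and $w(s)=V(\theta(s))+2\log\sin\theta(s)$, under which the ODE transforms into the near-autonomous equation
\[
w_{ss}-(N-3)\tanh s\,w_s+2(N-2)e^w-2(N-3)=0,\qquad s\in\R.
\]
Its autonomous limits as $s\to\mp\infty$, namely $w_{ss}\pm(N-3)w_s+2(N-2)e^w-2(N-3)=0$, share the equilibrium $w_0=\log\frac{N-3}{N-2}$, which corresponds to the formal singular separatrix $V_0(\theta)=w_0-2\log\sin\theta$ of the original equation. The linearisation about $w_0$ has characteristic equation $\lambda^2\pm(N-3)\lambda+2(N-3)=0$ with discriminant $(N-3)(N-11)$, which is strictly negative precisely when $3<N<11$. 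Hence in the range $4\le N\le 10$ both asymptotic equilibria are spirals (stable as $s\to-\infty$, unstable as $s\to+\infty$), and this spiralling supplies the rotation mechanism I intend to exploit.

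Using the spiral structure together with a priori bounds ruling out blow-up of $V(\,\cdot\,;\alpha)$ before $\theta=\pi/2$, I expect to show that the number of interior zeros of $V'(\,\cdot\,;\alpha)$ in $(0,\pi/2)$ can be made arbitrarily large by suitable choice of $\alpha$; continuity of $\alpha\mapsto V(\,\cdot\,;\alpha)$ and an intermediate value argument then produce, for each $k\ge 1$, a value $\alpha_k$ with $V(\,\cdot\,;\alpha_k)$ having exactly $k$ interior zeros of $V'$ in $(0,\pi/2)$ and $V'(\pi/2;\alpha_k)=0$; extending by reflection across $\pi/2$ yields countably many distinct nonconstant axially symmetric solutions of \eref{V}. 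The main obstacle is translating the linear spiral dynamics of the asymptotic autonomous systems into a quantitative winding estimate for the full non-autonomous equation: one must control trajectories in the transition regime $s\approx 0$ where $\tanh s$ departs markedly from $\pm 1$, and rule out that a trajectory escapes the relevant sector of the $(w,w_s)$ phase plane before the required number of windings has accumulated. The spectral window $(N-3)(N-11)<0$ is precisely what guarantees a nonzero rotation speed of the linearised flow, and this is what pins down the dimensional restriction $4\le N\le 10$.
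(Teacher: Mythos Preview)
Your overall strategy coincides with the paper's: reduce to the axially symmetric ODE on $[0,\pi/2]$, pass to $t=\log\tan(\theta/2)$ and subtract the singular profile (your $w$ is the paper's $x$ shifted by the constant $\kappa_{N-1}=\log\frac{N-3}{N-2}$), observe that the discriminant $(N-3)(N-11)<0$ makes the equilibrium a spiral, and shoot on the initial height $\alpha$ using a winding count. However, you correctly flag and then leave unresolved the only genuinely hard step: showing that the winding number of the orbit up to the endpoint tends to infinity as $\alpha\to\infty$. The spiral is a feature of the \emph{autonomous} limit at $t=-\infty$; the linearisation by itself does not transfer this to the non-autonomous flow on the fixed interval $(-\infty,0]$ with $\alpha$-dependent data. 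Your remark about the ``transition regime $s\approx 0$'' and the unstable spiral at $s\to+\infty$ is off target: the problem lives entirely on $t\le 0$, and the difficulty is not the behaviour near the endpoint but rather proving closeness to the limit orbit over an interval whose \emph{length} grows with $\alpha$.

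The paper supplies precisely the missing device: the translation $s=t+\tfrac{\alpha}{2}$. After this shift the asymptotic conditions at $-\infty$ become \emph{independent} of $\alpha$, while the damping coefficient becomes $-(N-3)\tanh\!\bigl(s-\tfrac{\alpha}{2}\bigr)$, which converges to $N-3$ uniformly on every fixed half-line $(-\infty,s_0]$ as $\alpha\to\infty$. Reverting to a radial variable $r=e^{s}$ and introducing the small parameter $\delta=e^{-\alpha}$ turns the problem into a regular perturbation of the Euclidean equation $u''+\frac{N-2}{r}u'+8(N-2)e^{u}=0$; an implicit function theorem argument in $C^0[0,r_0]$ then yields $C^1$-closeness of the shifted orbit $(\tx,\ty)$ to the limit heteroclinic $(\bx,\by)$ on $(-\infty,s_0]$. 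Since that limit orbit spirals into the origin and crosses $\{y=0\}$ infinitely often, taking $s_0$ large forces arbitrarily many crossings of the perturbed orbit before the endpoint $s=\tfrac{\alpha}{2}$, and continuity in $\alpha$ of the endpoint then produces the sequence $\{\alpha_j\}$ with $\ty(\tfrac{\alpha_j}{2},\alpha_j)=0$. Boundedness along the way is obtained not from linear sector arguments but from the Lyapunov function $E(x,y)=\tfrac{y^2}{2}+2(N-3)(e^x-x)$, which is nonincreasing for $s\le\tfrac{\alpha}{2}$. This shift-and-small-parameter mechanism is the crux of the proof and is absent from your outline.
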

In this paper we mainly study (\ref{V}).

When $N=3$, Bidaut-V\'eron {\it et al.} \cite{MBL91} studied nonradial singular solutions of (\ref{E}) and other equations.
The equation (\ref{V}) becomes $\Delta_{\bbS^2}V+2(e^V-1)=0$.
This is called the conformal Gaussian curvature equation, and this equation and related equations have been studied for more than three decades.
All regular solutions of (\ref{V}) are described in \cite{CY87,O82}.
In particular, axially symmetric solutions can be written explicitly as $V(\theta)=-2\log(\sqrt{c^2+1}-c\cos\theta)$, where $c\in\R$ is constant and $\theta\in [0,\pi]$ is the geodesic distance from the north pole of $\bbS^2$.
Hence,
\begin{equation}\label{SS3}
U(R,\theta)=-2\log R+\log 2-\log (\sqrt{c^2+1}-c\cos\theta)
\end{equation}
is a one parameter family of nonradial singular solutions of (\ref{E}) in the case $N=3$.
The singular solution (\ref{SS3}) can be seen as a singular solution of the Dirichlet problem
\[
\begin{cases}
\Delta U+e^U=0 & \textrm{in}\ \ \Omega\backslash\{0\}\\
U=0 & \textrm{on}\ \ \partial\Omega,
\end{cases}
\]
where $\Omega:=\{ U>0\}\subset\R^3$.

Nontrivial one-point singular solutions of the equation $\Delta U+e^U=0$ were constructed by several authors when the domain is bounded.
In \cite{R99b} R\'eba\"{i} studied nonradial singular solutions in the case $N=3$.
Let $B_{r}$ denote the ball centered at the origin with radius $r>0$.
She showed, among other things, that there is small $\e>0$ such that if $\xi_0\in B_{\e}$, then the problem
\[
\begin{cases}
\Delta U+\lambda e^U=0 & \textrm{in}\ \ B_1\backslash\{\xi_0\}\\
U=0 & \textrm{on}\ \ \partial B_1
\end{cases}
\]
has a singular solution for some $\lambda>0$ provided that $N=3$.
In particular, this singular solution is not radially symmetric.
Note that the same result was announced by H.~Matano and his method is different from \cite{R99b}.
In \cite{DD07} D\'avila and Dupaigne constructed a singular solution when the domain is close to the unit ball provided that $N\ge 4$.
Specifically, they showed that if $N\ge 4$ and $t>0$ is small, then the problem
\[
\begin{cases}
\Delta U+\lambda e^U=0 & \textrm{in}\ \ D_t\backslash\{\xi(t)\}\\
U=0 & \textrm{on}\ \ \partial D_t
\end{cases}
\]
has a singular solution $(\lambda(t),U(x,t))$ such that as $t\rightarrow 0$,
\[
\left\|U(x,t)-(-2\log{|x-\xi(t)|})\right\|_{L^{\infty}(D_t)}\rightarrow 0\qquad\textrm{and}\qquad\lambda(t)\rightarrow 2(N-2),
\]
where $D_t:=\{ x+t\psi(x);\ x\in B_1\subset\R^N\}$ and $\psi$ is a $C^2$-mapping from $\bar{B}_1$ to $\R^N$. 
%***************************************************************************
Solutions with finitely or infinitely many singularities were constructed by Pacard~\cite{P90} and Horshin~\cite{H95,H97} when $N>10$, and by R\'eba\"i~\cite{R99a} when $N=3$.
Our solutions given by Theorem~\ref{A} are candidates of the asymptotic profiles of those singular solutions near a singular point.

Similar problems were studied for the equation $\Delta U+\lambda (1+U)^p=0$ in \cite{DD07,R99b} and the equation $\Delta U+U^p=0$ in \cite{DGW12}.
In particular, Dancer {\it et al.} \cite{DGW12} constructed infinitely many nonradial positive singular solutions of the Lane-Emden equation
\begin{equation}\label{LE}
\Delta U+U^p=0\ \ \textrm{in}\ \ \R^N\backslash\{0\}
\end{equation}
with
\begin{equation}\label{DGWC}
\frac{N+1}{N-3}<p<p_{JL}(N-1)\quad\textrm{and}\quad N\ge 4,
\end{equation}
where $p_{JL}(M)$ is defined by
\[
p_{JL}(M):=
\begin{cases}
1+\frac{4}{M-4-2\sqrt{M-1}} & \textrm{if}\ M\ge 11,\\
\infty & \textrm{if}\ 2\le M\le 10.
\end{cases}
\]
%They constructed singular solutions of (\ref{LE}) of the form
Let us consider the solution of the form
\[
U(R,\sigma)=R^{-\frac{2}{p-1}}V(\sigma).
\]
Then $V$ satisfies the equation
\begin{equation}\label{YP}
\Delta_{\bbS^{N-1}}V-\mu V+V^p=0,
\end{equation}
where
\[
\mu:=\frac{2}{p-1}\left( N-2-\frac{2}{p-1}\right).
\]
If $p<(N+1)/(N-3)$, then all solutions are constant \cite{MBL91}.
When $p=(N+1)/(N-3)$, $\mu=(N-3)(N-1)/4$.
Then, (\ref{YP}) becomes Yamabe problem on $\bbS^{N-1}$ and various solutions are known.
If (\ref{DGWC}) holds, then in \cite{DGW12} Dancer {\it et al.} showed that (\ref{YP}) has infinitely many nonconstant regular positive solutions.
Theorem~\ref{B} in the present paper is its exponential counterpart.

Let us mention technical details.
We construct nonconstant regular solutions of (\ref{V}).
We use an ODE approach.
Specifically, we find solutions $v$ in the space of functions depending only on $\theta\in[0,\pi]$ which is the geodesic distance from the north pole of $\bbS^{N-1}$.
Since the solution is regular at both the north and south poles, it holds that $v'(0)=v'(\theta)=0$.
If it is not the case, then $v$ is singular at $\theta=0$ or $\pi$, and hence the solution (\ref{SS}) becomes singular along a (half) line.
$v$ satisfies
%The problem becomes the following:
\begin{equation}\label{S1E1}
\begin{cases}
v''+(N-2)\frac{\cos\theta}{\sin\theta}v'+2(N-2)(e^v-1)=0, & 0<\theta<\pi,\\
v'(0)=v'(\pi)=0.
\end{cases}
\end{equation}
%since the solution of (\ref{S1E1}) becomes the solution of (\ref{YP}).
If $v(\theta)$ satisfies (\ref{S1E1}), then $v(\pi-\theta)$ also satisfies (\ref{S1E1}).
We will use a shooting method from $\theta=0$.
In this case it is difficult to analyze the behavior of the solution near $\theta=\pi$, because of the singularity of the equation at $\theta=\pi$.
In order to avoid the difficulty we find symmetric solutions, i.e., $v(\theta)=v(\pi-\theta)$.
Then (\ref{S1E1}) becomes the following:
\begin{equation}\label{S1E2}
\begin{cases}
v''+(N-2)\frac{\cos\theta}{\sin\theta}v'+2(N-2)(e^v-1)=0, & 0<\theta<\frac{\pi}{2},\\
v'(0)=v'\left(\frac{\pi}{2}\right)=0.
\end{cases}
\end{equation}
If $N\ge 4$, then (\ref{S1E2}) has the exact singular solution
\begin{equation}\label{S1E3}
v^*(\theta):=-2\log\sin\theta+\kappa_{N-1},\ \ \textrm{where}\ \ \kappa_{N-1}:=\log\frac{N-3}{N-2}.
\end{equation}
In the Euclidean case it is well known that the equation $U''+\frac{N-1}{R}U'+e^U=0$ can be changed into a homogeneous equation in $R$ by the transformation
\[
X(T)=U(R)-U^*(R)\quad\textrm{and}\quad T:=\log R.
\]
Therefore, a phase plane analysis is applicable.
See (\ref{S2E4}) in Section~2.
%works well, where $U^*(R)$ is the singular solution defined by (\ref{SSR}).
Essentially the same transformation works well for the problem (\ref{S1E2}).
Hereafter we consider the case $N\ge 4$. Then $v^*(\theta)$ is well defined.
Let 
\begin{equation}\label{S1E3+}
x(t):=v(\theta)-v^*(\theta)\quad\textrm{and}\quad t:=\log\tan\frac{\theta}{2}.
\end{equation}
This change of variables is natural, since $dt/d\theta=1/\sin\theta$.
$x(t)$ satisfies
\begin{equation}\label{S1E4}
\begin{cases}
x''-(N-3)\tanh(t)x'+2(N-3)(e^x-1)=0, & -\infty<t<0,\\
\cosh(t)x'(t)+2\sinh(t)\rightarrow 0\ \ \textrm{as}\ \ t\rightarrow -\infty,\\
x'(0)=0,
\end{cases}
\end{equation}
where we use the equality $v'(\theta)=\cosh(t)x'(t)+2\sinh(t)$.
The method so far is the same as the case $\Delta U+U^p=0$ used in \cite{DGW12}.
However, our method of the construction of solutions of (\ref{S1E4}) is different from that of \cite{DGW12} which uses the matched asymptotic expansions.
Our proof is elementary and shorter.
Using our method, one can obtain the main result of \cite{DGW12}, i.e., 
the existence of infinitely many positive solutions of (\ref{YP}).
%(\ref{YP}) has infinitely many positive radial solutions if (\ref{DGWC}) holds.
See Section~4 of the present paper.
We use a phase plane analysis, although (\ref{S1E4}) is not homogeneous.
%The effect of this inhomogeneity can be reduced by a scaling argument.
%A regular perturbation method and the winding number of the orbit $(x(t),x'(t))$ play important roles.
When $t$ is negatively large, the equation in (\ref{S1E4}) is close to the homogeneous equation (\ref{S2E1}) below.
Perturbing a solution of the homogeneous equation, we find a solution of (\ref{S1E4}) that satisfies the boundary conditions of (\ref{S1E2}).
This method is inspired by that of \cite{MP91}.
However, the authors of \cite{MP91} used a technical argument of the uniform convergence to the solution of the limit equation.
% instead of a regular perturbation method.

This paper consists of four sections.
In Section~2 we recall known results of radial solutions of $\Delta U+e^U=0$.
In Section~3 we prove Theorem~\ref{B} which leads to Theorem~\ref{A}.
In Section~4 we briefly prove the existence of infinitely many positive radial solutions of (\ref{YP}), using our method.

% **********************************************************
% **********************************************************
% **********************************************************
% Section 2
% **********************************************************
% **********************************************************
% **********************************************************
\section{Preliminaries}
In this section we recall known results about the following equation of $\bx(s)$:
\begin{equation}\label{S2E1}
\bx''+(N-3)\bx'+2(N-3)(e^{\bx}-1)=0,\quad -\infty<s<\infty.
\end{equation}
This is the limit equation of (\ref{S1E4}) as $t\rightarrow -\infty$.
% and it also appears in the problem
This equation is equivalent to the radial equation of the original equation (\ref{E}) in one dimension less, i.e.,
%\begin{equation}\label{S2E2}
\[
\begin{cases}
\Delta U+e^U=0 & \textrm{in}\ \ \R^{N-1},\\
U\ \textrm{is radial.}
\end{cases}
\]
%\end{equation}
%First, we derive (\ref{S2E1}) from (\ref{S2E2}).
%We consider the initial value problem
To see this we consider the problem
\begin{equation}\label{S2E3}
\begin{cases}
U''+\frac{N-2}{R}U'+e^U=0, & 0<R<\infty,\\
U(0)=\bar{\alpha},\\
U'(0)=0.
\end{cases}
\end{equation}
The equation in (\ref{S2E3}) has the singular solution $U^*(R)=-2\log R+\bar{\kappa}_{N-1}$, where $\bar{\kappa}_{N-1}:=\log 2(N-3)$.
We define $\bx(s)$ and $s$ by
\[
\bx(s):=U(R)-U^*(R)\qquad\textrm{and}\quad s:=\log R,
\]
respectively.
The problem (\ref{S2E3}) becomes
\begin{equation}\label{S2E4}
\begin{cases}
\bx''+(N-3)\bx'+2(N-3)(e^{\bx}-1)=0, & -\infty<s<\infty,\\
\bx(s)-2s+\bar{\kappa}_{N-1}-\bar{\alpha}\rightarrow 0\ \ \textrm{as}\ \ s\rightarrow -\infty,\\
e^{-s}(\bx'(s)-2)\rightarrow 0\ \ \textrm{as}\ \ s\rightarrow -\infty,
\end{cases}
\end{equation}
where we use the equalities $U(R)=\bx(s)-2s+\bar{\kappa}_{N-1}$ and $U'(R)=e^{-s}(\bx'(s)-2)$.

We use a phase plane argument.
Let $\by(s):=\bx'(s)$.
By (\ref{S2E4}) we obtain the following:
\begin{equation}\label{S2E5}
\begin{cases}
\bx'=\by, & -\infty<s<\infty,\\
\by'=-(N-3)\by-2(N-3)(e^{\bx}-1), & -\infty<s<\infty,\\
\bx(s)-2s+\bar{\kappa}_{N-1}-\bar{\alpha}\rightarrow 0\ \ \textrm{as}\ \ s\rightarrow -\infty,\\
e^{-s}(\by(s)-2)\rightarrow 0\ \ \textrm{as}\ \ s\rightarrow -\infty.
\end{cases}
\end{equation}
Various properties of the solution $(\bx(s),\by(s))$ of (\ref{S2E5}), which we call the orbit, are known.
We summarize these properties of this orbit in the following proposition:
\begin{proposition}\label{S2P1}
Assume that $N\ge 4$.
The (\ref{S2E5}) has the unique entire solution.
The orbit $\{(\bx(s),\by(s));\ -\infty<s<\infty\}$ in the $xy$-plane starts along the line $y=2$ at $s=-\infty$ and converges to the origin as $s\rightarrow \infty$.
When $4\le N\le 10$, the origin is a stable spiral and the orbit rotates clockwise around the origin.
Therefore, there is $\{s_j\}_{j=1}^{\infty}$ $(s_1<s_2<\cdots\rightarrow\infty)$ such that $\by(s_j)=0$ $(s\in\{1,2,\ldots\})$ and
\begin{equation}\label{S2P1E0}
\bx(s_2)<\bx(s_4)<\cdots<\bx(s_{2j})<\cdots<0<\cdots<\bx(s_{2j-1})<\cdots<\bx(s_3)<\bx(s_1).
\end{equation}
See Figure~\ref{fig1}.
\end{proposition}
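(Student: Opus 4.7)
My plan is a standard phase-plane argument built on a Lyapunov function. I would first establish existence and uniqueness of the entire orbit. The asymptotic conditions at $s=-\infty$ in \eqref{S2E5} encode regularity at $R=0$ of the original initial-value problem \eqref{S2E3}, so a contraction-mapping argument on
\[
U(R)=\bar\alpha-\int_0^R r^{1-N}\int_0^r t^{N-2}e^{U(t)}\,dt\,dr
\]
produces a unique smooth local solution, and the monotone quantity $F(R):=\tfrac12(U'(R))^2+e^{U(R)}$, satisfying $F'(R)=-\tfrac{N-2}{R}(U'(R))^2\le 0$, gives a priori bounds on $U$ and $U'$ on every $[\varepsilon,\infty)$, hence global existence. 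Passing back via $\bx(s)=U(R)-U^*(R)$, $s=\log R$ then yields the unique entire solution of \eqref{S2E5}, and the imposed asymptotics say directly that $\by(s)\to 2$ and $\bx(s)\to-\infty$ as $s\to-\infty$, which is the content of ``the orbit starts along the line $y=2$.''

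For convergence to the origin I would use the Lyapunov function
\[
W(x,y):=\tfrac12 y^2+2(N-3)(e^x-x-1),
\]
whose derivative along solutions of \eqref{S2E5} is $\dot W=-(N-3)\by^2\le 0$. Since $e^x-x-1\ge 0$ with equality only at $x=0$ and grows to $+\infty$ as $|x|\to\infty$, the sublevel sets of $W$ are compact and the orbit is bounded on $[s_0,\infty)$ for each $s_0$. LaSalle's invariance principle then forces the $\omega$-limit set into $\{\by=0\}$, and invariance combined with $\by'=-2(N-3)(e^{\bx}-1)$ forces $\bx=0$ as well, giving $(\bx(s),\by(s))\to(0,0)$ as $s\to\infty$.

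To capture the spiral behavior and the ordering \eqref{S2P1E0}, I would linearize \eqref{S2E5} at the origin, obtaining the characteristic polynomial $\lambda^2+(N-3)\lambda+2(N-3)=0$ with discriminant $(N-3)(N-11)$. For $4\le N\le 10$ this is negative, so the eigenvalues are complex conjugate with strictly negative real part and the origin is a stable focus; evaluating the linearized vector field at $(1,0)$ gives $(0,-2(N-3))$, confirming clockwise rotation. Once the orbit enters a sufficiently small neighborhood of the origin (which it must, by the previous step), the Hartman--Grobman picture shows that $\by$ must change sign infinitely often at an increasing sequence $s_1<s_2<\cdots\to\infty$ with $\bx(s_j)$ alternating in sign. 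Finally, $W(\bx(s),\by(s))$ is strictly decreasing between crossings (since $\by$ is not identically zero on any such interval), so the numbers $W(\bx(s_j),0)=2(N-3)(e^{\bx(s_j)}-\bx(s_j)-1)$ form a strictly decreasing sequence; combined with the strict monotonicity of $x\mapsto 2(N-3)(e^x-x-1)$ on each of $(-\infty,0)$ and $(0,\infty)$, this yields precisely \eqref{S2P1E0}. The delicate point to handle carefully will be the passage from the global Lyapunov argument to the local spiral picture: one must verify that the restriction $4\le N\le 10$ is exactly what makes the discriminant negative and hence the rotation genuine, distinguishing this regime from $N\ge 11$ in which the origin becomes a stable node and no oscillation occurs.
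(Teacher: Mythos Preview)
Your proposal is correct and follows essentially the same route as the paper: the same Lyapunov function (the paper's $E$ differs from your $W$ only by the additive constant $2(N-3)$), the same linearization and discriminant computation, and the same conclusion about the spiral for $4\le N\le 10$. The only cosmetic differences are that the paper omits the existence/uniqueness argument entirely and uses Poincar\'e--Bendixson in place of LaSalle, while your derivation of the ordering \eqref{S2P1E0} via the strict decrease of $W(\bx(s_j),0)$ is actually more explicit than the paper's one-line appeal to the direction of the vector field.
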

\begin{figure}
\begin{center}
\includegraphics[scale=1.0]{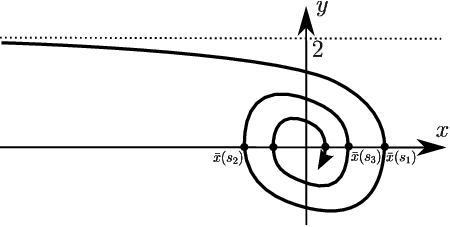}
\caption{A schematic picture of the phase plane in the case $4\le N\le 10$.}
\label{fig1}
\end{center}
\end{figure}
We briefly prove Proposition~\ref{S2P1} for readers' convenience.
\begin{proof}
We omit the proof of the existence and uniqueness of the solution.
We prove other properties of the orbit which are later used in the proof of the main theorem.
Because of (\ref{S2E5}), $\lim_{s\rightarrow-\infty}(\bx(s)-2s)=-\bar{\kappa}_{N-1}+\bar{\alpha}$ and $\lim_{s\rightarrow-\infty}(\by(x)-2)=0$.
Thus the orbit starts along the line $y=2$ at $s\rightarrow-\infty$.
The problem (\ref{S2E5}) has the Lyapunov function
\begin{equation}\label{S2P1E1--}
E(x,y):=\frac{y^2}{2}+2(N-3)(e^x-x).
\end{equation}
Indeed, we have
\begin{equation}\label{S2P1E1-}
\frac{d}{ds}E(\bx(s),\by(s))=-(N-3)\by^2(s)\le 0.
\end{equation}
We show by contradiction that the problem
\begin{equation}\label{S2P1E1}
\begin{cases}
\bx'=\by\\
\by'=-(N-3)\by-2(N-3)(e^{\bx}-1)
\end{cases}
\end{equation}
does not have a nontrivial periodic orbit.
Assume that (\ref{S2P1E1}) has a nontrivial periodic orbit.
Then we see by (\ref{S2P1E1-}) that $\by(s)\equiv 0$.
Because of (\ref{S2P1E1}), $\bx'(s)\equiv 0$ and $\bx(s)$ is constant.
The orbit $(\bx(s),\by(s))$ is an equilibrium of (\ref{S2P1E1}).
This contradicts the assumption.
Hence, (\ref{S2P1E1}) does not have a nontrivial periodic orbit.

Let
\begin{equation}\label{S2P1E1+}
\Omega_c:=\{(x,y);\ E(x,y)<c\}.
\end{equation}
For each large $c>0$, $\Omega_c$ is a bounded set in the $xy$-plane.
For large $c>0$, there is $s_0\in\R$ such that $\{(\bx(s),\by(s))\}_{s\ge s_0}\subset\Omega_c$, and $\{(\bx(s),\by(s))\}_{s\ge s_0}$ is bounded.
Because (\ref{S2P1E1}) does not have a periodic orbit, by the Poincar\'e-Bendixson theorem we see that the orbit converges to the origin which is the unique equilibrium.

In order to study the behavior of the orbit near the origin we consider the linearized problem of (\ref{S2P1E1}) at the origin, i.e.,
\[
\left(
\begin{array}{cc}
0 & 1\\
-2(N-3) & -(N-3)
\end{array}
\right).
\]
The two eigenvalues $\lambda_{\pm}$ of the matrix are given by the characteristic equation $\lambda^2+(N-3)\lambda+2(N-3)=0$.
We have
\[
\lambda_{\pm}:=\frac{1}{2}\left\{ -(N-3)\pm\sqrt{(N-3)(N-11)}\right\}.
\]
If $4\le N\le 10$, then the two eigenvalues are complex with negative real part.
Hence, the origin is a stable spiral.
We see by the direction of the vector field defined by (\ref{S2P1E1}) that the orbit rotates clockwise around the origin and (\ref{S2P1E0}) holds.
The proof is complete.
\end{proof}

% **********************************************************
% **********************************************************
% **********************************************************
% Section 3
% **********************************************************
% **********************************************************
% **********************************************************
\section{Proof of Theorem~\ref{B}}
In order to find solutions of (\ref{S1E2}) we study the problem
\begin{equation}\label{S3E1}
\begin{cases}
v''+(N-2)\frac{\cos\theta}{\sin\theta}v'+2(N-2)(e^v-1)=0, & 0<\theta<\frac{\pi}{2},\\
v(0)=\alpha,\\
v'(0)=0,
\end{cases}
\end{equation}
where $\alpha\in\R$ is a parameter.
We find a classical solution $v(\theta)\in C^2[0,\frac{\pi}{2}]$.
%We call $v(\theta)$ the solution of (\ref{S3E1}) if
%\begin{equation}\label{S3E1+-}
%v(\theta)\in C^2(0,\frac{\pi}{2}]\cap C^1[0,\frac{\pi}{2}]
%\end{equation}
%and if $v(\theta)$ satisfies (\ref{S3E1}).
%We also study the problem
%\begin{equation}\label{S3E1+}
%\begin{cases}
%v''+(N-2)\frac{\cos\theta}{\sin\theta}v'+2(N-2)(e^v-1)=0, & 0<\theta<\frac{\pi}{2},\\
%\lim_{\theta\downarrow 0}v(\theta)=\alpha,\\
%\lim_{\theta\downarrow 0}v'(\theta)=0.
%\end{cases}
%\end{equation}
%We call $v(\theta)$ a solution of (\ref{S3E1+}) if
%\begin{equation}\label{S3E1++}
%v(\theta)\in C^2(0,\frac{\pi}{2}]
%\end{equation}
%and if $v(\theta)$ satisfies (\ref{S3E1+}).
%If $v(\theta)$ is the solution of (\ref{S3E1}), then the restricted function of $v(\theta)$ is the solution of (\ref{S3E1+}).
%We assume that $v(\theta)$ is the solution of (\ref{S3E1+}).
%We define $v(0)=\alpha$ so that $v(\theta)$ is defined on $[0,\frac{\pi}{2}]$ and continuous at $\theta=0$.
%Using L'Hospital's rule, we have
%\[
%v'(0)=\lim_{\theta\downarrow 0}\frac{v(\theta)-\alpha}{\theta}=\lim_{\theta\downarrow 0}\frac{v'(\theta)}{1}=0.
%\]
%Since $v'(0)=0=\lim_{\theta\downarrow 0}v'(\theta)$, $v'(\theta)\in %C^0[0,\frac{\pi}{2}]$.
%We see that $v(\theta)\in C^1[0,\frac{\pi}{2}]$ and $v(\theta)$ is the solution of (\ref{S3E1}).
%Thus, the solution of (\ref{S3E1+}) can be uniquely extended as the solution of (\ref{S3E1}).
%The problem (\ref{S3E1}) is equivalent to (\ref{S3E1+}).
%Hence, we consider (\ref{S3E1+}).

\begin{lemma}\label{S3L0}
Let $x(t)$ be defined by (\ref{S1E3+}).
The function $v(\theta)$ is the solution of (\ref{S3E1}) if and only if
\begin{equation}\label{S3L0E1+}
x(t)\in C^2(-\infty,0]
\end{equation}
and $x(t)$ satisfies
\begin{equation}\label{S3L0E2}
\begin{cases}
x''-(N-3)\tanh(t)x'+2(N-3)(e^x-1)=0, & -\infty<t<0,\\
x(t)+2\log\cosh(t)+\kappa_{N-1}-\alpha\rightarrow 0\ \ \textrm{as}\ \ t\rightarrow -\infty,\\
\cosh(t)x'(t)+2\sinh(t)\rightarrow 0\ \ \textrm{as}\ \ t\rightarrow -\infty.
\end{cases}
\end{equation}
\end{lemma}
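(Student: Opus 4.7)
The plan is to prove the lemma by direct change of variables, verifying in turn that the ODE, the boundary conditions, and the regularity class transfer between the two formulations. Because the change $t=\log\tan(\theta/2)$ is a $C^{\infty}$-diffeomorphism from $(0,\pi/2]$ onto $(-\infty,0]$, it suffices to show that $v$ satisfies the left-hand formulation exactly when $x(t):=v(\theta)-v^*(\theta)$ satisfies the right-hand one.

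First I would record the basic identities that follow from $e^{t}=\tan(\theta/2)$, namely
\[
\sin\theta=\frac{1}{\cosh t},\qquad \cos\theta=-\tanh t,\qquad \frac{dt}{d\theta}=\frac{1}{\sin\theta}=\cosh t.
\]
Using $(v^*)'(\theta)=-2\cot\theta$ and the chain rule applied to $x(t)=v(\theta)-v^*(\theta)$, I would then derive the formula
\[
v'(\theta)=\cosh(t)\,x'(t)+2\sinh(t),
\]
which is already invoked in the text. Differentiating once more and collecting terms gives an expression for $v''(\theta)$ in terms of $x''$, $x'$, $\sinh t$ and $\cosh t$.

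Next I would substitute these expressions into the ODE in (\ref{S3E1+}). Combining $v''$ with the drift term $(N-2)\cot\theta\,v'$ yields, after using $\cot\theta=-\sinh t/\!\cosh t\cdot\cosh t=-\sinh t\cosh t/\sin\theta$-type simplifications, a factor of $\cosh^2 t$ multiplying $x''-(N-3)\tanh(t)\,x'$, plus a residual constant term $2[\cosh^2 t-(N-2)\sinh^2 t]$. For the nonlinear term, the identity $v=x-2\log\sin\theta+\kappa_{N-1}$ together with $e^{\kappa_{N-1}}=(N-3)/(N-2)$ gives $e^v=\frac{N-3}{N-2}\,\cosh^2(t)\,e^x$, so that $2(N-2)(e^v-1)=2(N-3)\cosh^2(t)\,e^x-2(N-2)$. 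Adding everything, the constants collapse via $\cosh^2 t-(N-2)\sinh^2 t-(N-2)=-(N-3)\cosh^2 t$, and after dividing by $\cosh^2 t$ one recovers the ODE in (\ref{S3L0E2}). Running this computation in reverse establishes the converse.

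The boundary conditions transfer almost by inspection: as $\theta\downarrow 0$ we have $t\to-\infty$, and the relation $v(\theta)=x(t)-2\log\sin\theta+\kappa_{N-1}=x(t)+2\log\cosh(t)+\kappa_{N-1}$ shows that $v(\theta)\to\alpha$ is equivalent to $x(t)+2\log\cosh(t)+\kappa_{N-1}-\alpha\to 0$, while $v'(\theta)\to 0$ is equivalent to $\cosh(t)x'(t)+2\sinh(t)\to 0$ by the formula for $v'$. For the regularity claim, I would simply observe that $t\mapsto\theta(t)$ is smooth with nonvanishing derivative on $(-\infty,0]$, so $v\in C^2(0,\pi/2]$ if and only if $x\in C^2(-\infty,0]$.

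The calculation has no essential obstacle; the only place that requires attention is the constant-term bookkeeping at the step where one combines the linear terms in $t$-variables with the contribution coming from linearising $2(N-2)(e^v-1)$ around $v^*$, since a miscount there would leave a spurious inhomogeneous term and obscure the exact cancellation that produces the clean limit equation of Section~2.
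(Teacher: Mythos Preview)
Your proposal is correct and follows exactly the same approach as the paper: the paper's proof simply says ``by direct calculation'' for the equivalence of the equations and then invokes the identity $v'(\theta)=\cosh(t)x'(t)+2\sinh(t)$ together with the definition of $x(t)$ for the initial conditions and regularity. You have merely written out explicitly the computation that the paper leaves to the reader, and your bookkeeping (including the cancellation $\cosh^2 t-(N-2)\sinh^2 t-(N-2)=-(N-3)\cosh^2 t$) is correct.
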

We omit the proof.
%\begin{proof}
%By direct calculation we see that the equation in (\ref{S3E1+}) is equivalent to that of (\ref{S3L0E2}).
%Let $v(\theta)$ be the solution of (\ref{S3E1+}).
%It follows from the definition of $x(t)$ that (\ref{S3L0E1+}) holds if and only if (\ref{S3E1++}).
%Using (\ref{S1E3+}) and the equality $v'(\theta)=\cosh(t)x'(t)+2\sinh(t)$, we see that the initial conditions of (\ref{S3E1+}) are equivalent to those of (\ref{S3L0E2}).
%The proof is complete.
%\end{proof}
We call $x(t)$ the solution of (\ref{S3L0E2}) if (\ref{S3L0E1+}) and (\ref{S3L0E2}) hold.

If the solution of (\ref{S3L0E2}) satisfies $x'(0)=0$, then this solution satisfies (\ref{S1E4}).
Therefore, the function $v(\theta)$, which is associated to $x(t)$ by (\ref{S1E3+}), becomes a solution of (\ref{S1E2}).
\begin{lemma}\label{S3L1}
Let
\begin{equation}\label{S3L1E1}
\tx(s):=x(t)\quad\textrm{and}\quad s:=t+\frac{\alpha}{2}.
\end{equation}
%Then the problem (\ref{S3E2}) is equivalent to the following problem:
The function $x(t)$ is a solution of (\ref{S3L0E2}) if and only if
%$\tx(s)$ is a solution of the problem
\begin{equation}\label{S3L1E2-}
\tx(s)\in C^2(-\infty,\frac{\alpha}{2}]
\end{equation}
and $\tx(s)$ satisfies
\begin{equation}\label{S3L1E2}
\begin{cases}
\tx''-(N-3)\tanh\left(s-\frac{\alpha}{2}\right)\tx'+2(N-3)(e^{\tx}-1)=0, & -\infty<s<\frac{\alpha}{2},\\
\tx(s)-2s-2\log2+\kappa_{N-1}\rightarrow 0\ \ \textrm{as}\ \ s\rightarrow-\infty,\\
e^{-s}(\tx'(s)-2)\rightarrow 0\ \ \textrm{as}\ \ s\rightarrow-\infty.
\end{cases}
\end{equation}
\end{lemma}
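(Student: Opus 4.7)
The lemma is a pure change-of-variables statement with $s = t + \alpha/2$, so my plan is to verify each item of (\ref{S3L1E2}) from the corresponding item of (\ref{S3L0E2}) by direct substitution. Since the translation is a smooth bijection of $(-\infty, \alpha/2)$ onto $(-\infty, 0)$, the $C^2$-regularity equivalence (\ref{S3L0E1+}) $\Leftrightarrow$ (\ref{S3L1E2-}) is automatic, as is the agreement of the intervals of definition.

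For the differential equation, the translation preserves derivatives in the sense that $\tx^{(k)}(s) = x^{(k)}(t)$, so writing $\tanh(t) = \tanh(s - \alpha/2)$ transports the ODE of (\ref{S3L0E2}) verbatim to the ODE of (\ref{S3L1E2}) with the correct interval.

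The substantive content lies in the two asymptotic conditions. My plan is first to rewrite those of (\ref{S3L0E2}) in a cleaner asymptotic form using the elementary identities
\[
2\log\cosh(t) = -2t - 2\log 2 + 2\log(1 + e^{2t}),\qquad \cosh(t) x'(t) + 2\sinh(t) = \cosh(t)(x'(t) - 2) + 2 e^t,
\]
so that, as $t \to -\infty$, the two conditions in (\ref{S3L0E2}) become equivalent to $x(t) - 2t - 2\log 2 + \kappa_{N-1} - \alpha \to 0$ and $e^{-t}(x'(t) - 2) \to 0$, respectively. Substituting $t = s - \alpha/2$ and $\tx(s) = x(t)$ into the first relation, the term $-2t = -2s + \alpha$ cancels the free parameter $\alpha$ and yields $\tx(s) - 2s - 2\log 2 + \kappa_{N-1} \to 0$. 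Substituting into the second gives $e^{\alpha/2} e^{-s}(\tx'(s) - 2) \to 0$, and dividing by the nonzero constant $e^{\alpha/2}$ produces $e^{-s}(\tx'(s) - 2) \to 0$. Each step is reversible, so both implications are obtained simultaneously.

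No genuine obstacle is expected here; the exercise amounts to careful bookkeeping of the constants produced by the asymptotics of $\cosh(t)$ and $\sinh(t)$ at $-\infty$. The conceptual point worth emphasizing is that the shift $s = t + \alpha/2$ is chosen precisely to remove the parameter $\alpha$ from the leading-order asymptotics of $\tx$ and $\tx'$, pushing the entire $\alpha$-dependence into the coefficient $\tanh(s - \alpha/2)$ of the transformed ODE, which is the form best suited for the phase-plane and regular-perturbation arguments described in the introduction.
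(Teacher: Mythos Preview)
Your proposal is correct and follows essentially the same route as the paper: both verify the ODE and regularity equivalence trivially and then reduce the two asymptotic conditions via the identities $2\log\cosh(t)=-2t-2\log 2+o(1)$ and $\cosh(t)x'(t)+2\sinh(t)=\cosh(t)(x'(t)-2)+2e^{t}$, after which the substitution $t=s-\alpha/2$ cancels the $\alpha$ in the first condition and introduces the harmless factor $e^{\alpha/2}$ in the second. The only cosmetic difference is that the paper carries out these manipulations directly in the $s$-variable, whereas you first simplify in $t$ and then translate; your organization is slightly cleaner but the content is identical.
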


\begin{proof}
It is clear that the equation in (\ref{S3L0E2}) is equivalent to that in (\ref{S3L1E2}).
We check the equivalence of initial conditions.
Let $x(t)$ be a solution of (\ref{S3L0E2}).
Since
\[
\lim_{s\rightarrow-\infty}\left(2\log\cosh\left(s-\frac{\alpha}{2}\right)-2\log\frac{e^{-s}}{2}-\alpha\right)=\lim_{s\rightarrow-\infty}2\log (e^{2s-\alpha}+1)=0,
\]
\begin{multline*}
0=\lim_{t\rightarrow-\infty}(x(t)+2\log\cosh(t)+\kappa_{N-1}-\alpha)\\
=\lim_{s\rightarrow-\infty}\left\{\tx(s)+2\log\frac{e^{-s}}{2}+\kappa_{N-1}+\left(2\log\cosh\left(s-\frac{\alpha}{2}\right)-2\log\frac{e^{-s}}{2}-\alpha\right)\right\}\\
=\lim_{s\rightarrow-\infty}\left(\tx(s)-2s-2\log 2+\kappa_{N-1}\right).
\end{multline*}
Since
\[
0=\lim_{t\rightarrow-\infty}(\cosh(t)x'(t)+2\sinh(t))=\lim_{s\rightarrow-\infty}\left\{\frac{1}{2}(e^{-s+\frac{\alpha}{2}}+e^{s-\frac{\alpha}{2}})(\tx'(s)-2)+2e^{s-\frac{\alpha}{2}}\right\},
\]
\[
0=\lim_{s\rightarrow-\infty}\left|\frac{1}{2}(e^{-s+\frac{\alpha}{2}}+e^{s-\frac{\alpha}{2}})(\tx'(s)-2)\right|\ge\frac{e^{\frac{\alpha}{2}}}{2}\lim_{s\rightarrow-\infty}|e^{-s}(\tx'(s)-2)|.
\]
Hence, $\lim_{s\rightarrow-\infty}e^{-s}(\tx'(s)-2)=0$.
Thus $\tx(s)$ satisfies (\ref{S3L1E2}).
We can check that the converse is also true.
We omit the detail.
The proof is complete.
\end{proof}
We call $\tx(s)$ the solution of (\ref{S3L1E2}) if (\ref{S3L1E2-}) and (\ref{S3L1E2}) hold.

Let $\tx(s)$ be the solution of (\ref{S3L1E2}), and let $\ty(s):=\tx'(s)$.
The pair of functions $(\tx(s),\ty(s))$ satisfies
\begin{equation}\label{S3E3}
\begin{cases}
\tx'=\ty, & -\infty<s<s_0,\\
\ty'=(N-3)\tanh\left(s-\frac{\alpha}{2}\right)\ty-2(N-3)(e^{\tx}-1), & -\infty<s<s_0,\\
\tx(s)-2s+\tilde{\kappa}_{N-1}\rightarrow 0\ \ \textrm{as}\ \ s\rightarrow-\infty,\\
e^{-s}(\ty(s)-2)\rightarrow 0\ \ \textrm{as}\ \ s\rightarrow-\infty,
\end{cases}
\end{equation}
where $\tilde{\kappa}_{N-1}:=\kappa_{N-1}-2\log 2$ and $s_0=\alpha/2$.
We study the behavior of the orbit $(\tx(s),\ty(s))$ when $\alpha$ is large.
Since $\alpha$ is large, $\tanh\left(s-\frac{\alpha}{2}\right)$ is close to $-1$.
We can expect that $(\tx(s),\ty(s))$ behaves like the solution of (\ref{S2E5}) with $\bar{\alpha}=\bar{\kappa}_{N-1}-\kappa_{N-1}+2\log 2$.

\begin{lemma}\label{S3L2}
Let $s_0\in\R$ be fixed.
Let
\[
u(r):=\tx(s)-2s+\tilde{\kappa}_{N-1}\quad\textrm{and}\quad r:=e^s.
\]
The pair of functions
\begin{equation}\label{S3L2E0}
(\tx(s),\ty(s))\in C^2(-\infty,s_0]\times C^1(-\infty,s_0]
\end{equation}
satisfies (\ref{S3E3}) if
\begin{equation}\label{S3L2E1-}
u(r)\in C^2(0,r_0]\cap C^1[0,r_0]
\end{equation}
and $u(r)$ satisfies the problem
\begin{equation}\label{S3L2E1}
\begin{cases}
u''+\frac{N-2}{r}u'+8(N-2)e^u-\frac{2(N-3)\delta}{1+\delta r^2}(ru'+2)=0, & 0<r\le r_0,\\
u(0)=0,\\
u'(0)=0,
\end{cases}
\end{equation}
where $r_0:=e^{s_0}$ and $\delta=e^{-\alpha}$.
\end{lemma}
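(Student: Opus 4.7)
The lemma is a direct change-of-variables computation: given a solution $u$ of (\ref{S3L2E1}), define $\tx(s):=u(e^s)+2s-\tilde{\kappa}_{N-1}$ and $\ty(s):=\tx'(s)$ and show (\ref{S3E3}) is satisfied. First I would apply the chain rule with $dr/ds = r$ to get
\[
\ty(s)=\tx'(s)=ru'(r)+2,\qquad \tx''(s)=ru'(r)+r^2u''(r),
\]
so the claimed $C^2(-\infty,s_0]\times C^1(-\infty,s_0]$ regularity of $(\tx,\ty)$ follows immediately from the hypothesis $u\in C^2(0,r_0]\cap C^1[0,r_0]$.

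Next I would rewrite the two $s$-dependent coefficients in (\ref{S3E3}) in terms of $r$ and $\delta=e^{-\alpha}$. Since $e^{2(s-\alpha/2)}=\delta r^2$, we obtain
\[
-\tanh\!\bigl(s-\tfrac{\alpha}{2}\bigr)=\frac{1-\delta r^2}{1+\delta r^2}=1-\frac{2\delta r^2}{1+\delta r^2},
\]
and using $\tilde{\kappa}_{N-1}=\kappa_{N-1}-2\log 2=\log\frac{N-3}{4(N-2)}$ we find $e^{\tx}=\frac{4(N-2)}{N-3}\,r^2 e^u$, so $2(N-3)(e^{\tx}-1)=8(N-2)r^2 e^u-2(N-3)$. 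Substituting these identities together with the derivative formulas into the second-order ODE $\tx''-(N-3)\tanh(s-\alpha/2)\tx'+2(N-3)(e^{\tx}-1)=0$, I would then observe that the constant $+2(N-3)$ produced by expanding $(N-3)(ru'+2)$ exactly cancels the $-2(N-3)$ coming from the exponential term. What remains is
\[
(N-2)ru'+r^2u''+8(N-2)r^2 e^u-\frac{2(N-3)\delta r^2(ru'+2)}{1+\delta r^2}=0,
\]
and dividing by $r^2$ gives the ODE in (\ref{S3L2E1}).

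Finally, for the conditions at $s=-\infty$ I would use that the substitution sends $s\to-\infty$ to $r\to 0+$. Directly from the definitions, $\tx(s)-2s+\tilde{\kappa}_{N-1}=u(r)\to u(0)=0$ by continuity, and $e^{-s}(\ty(s)-2)=r^{-1}(ru'(r))=u'(r)\to u'(0)=0$; so the two decay conditions of (\ref{S3E3}) follow exactly from the point values $u(0)=0$ and $u'(0)=0$, which is precisely the content of (\ref{S3L2E1}) together with the regularity hypothesis $u\in C^1[0,r_0]$. I do not expect a real obstacle here; the main thing is to keep correct track of signs and to notice the cancellation of the two $\pm 2(N-3)$ constants, which is what forces the coefficient of $ru'$ to come out as $(N-2)$ rather than $(N-3)$ or $-(N-4)$.
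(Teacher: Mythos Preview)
Your proposal is correct and is exactly the direct change-of-variables computation the paper has in mind; indeed the paper omits the proof entirely, saying only that it is almost the same as that of Lemma~\ref{S3L0}. Your handling of the cancellation of the $\pm 2(N-3)$ constants and of the two asymptotic conditions via $u(0)=u'(0)=0$ is precisely the point of the transformation.
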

We omit the proof.
%\begin{proof}
%The proof is almost the same as that of Lemma~\ref{S3L0}.
%\end{proof}
Lemma~\ref{S3L2} shows that the limit equation of (\ref{S3L2E1}) as $\delta\downarrow 0$ is $u''+\frac{N-2}{r}u'+8(N-2)e^u=0$.
%The change of variables (\ref{S3L1E1}) corresponds to a self
%Since $u(r)+\alpha=v(\theta)+4\log\cos\frac{\theta}{2}$ and $r=e^{\frac{\alpha}{2}}\tan\frac{\theta}{2}$, Lemma~\ref{S3L2} corresponds to the scaling argument.

We call $u(r)$ the solution of (\ref{S3L2E1}) if (\ref{S3L2E1-}) and (\ref{S3L2E1}) hold.

\begin{lemma}\label{S3L3}
Let $r_0>0$ be fixed.
Let $u(r,\delta)$ be the solution of (\ref{S3L2E1}).
For each small $\e>0$, there exists $\delta_0>0$ such that if $|\delta|<\delta_0$, then $\left\|u(\,\cdot\,,0)-u(\,\cdot\,,\delta)\right\|_{C^1[0,r_0]}<\e$.
\end{lemma}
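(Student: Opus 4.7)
The plan is to recast (\ref{S3L2E1}) as an integral equation and then apply a Gronwall-type estimate. Multiplying the ODE by $r^{N-2}$ gives $(r^{N-2}u')'=r^{N-2}\bigl[-8(N-2)e^u+\tfrac{2(N-3)\delta}{1+\delta r^2}(ru'+2)\bigr]$, and integrating from $0$ to $r$ against the initial conditions $u(0)=u'(0)=0$ yields
\[
u'(r)=-\frac{1}{r^{N-2}}\int_0^r s^{N-2}\left[8(N-2)e^{u(s)}-\frac{2(N-3)\delta}{1+\delta s^2}\bigl(s u'(s)+2\bigr)\right]ds,
\]
together with $u(r)=\int_0^r u'(\rho)\,d\rho$. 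The elementary identity $r^{-(N-2)}\int_0^r s^{N-2}\,ds=r/(N-1)$ shows that the apparently singular weight $r^{-(N-2)}$ is tame: composed with integration against a bounded function it is majorized by $r_0/(N-1)$ on $[0,r_0]$.

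Write $u_0:=u(\,\cdot\,,0)$ and $u_\delta:=u(\,\cdot\,,\delta)$, and fix an a priori $C^1$-bound $M$ on both $u_0$ and $u_\delta$ on $[0,r_0]$ (discussed below). Subtracting the integral identities for $u_0$ and $u_\delta$ and applying the mean value theorem gives $|e^{u_\delta(s)}-e^{u_0(s)}|\le e^M|u_\delta(s)-u_0(s)|$, while the forcing term $\tfrac{2(N-3)\delta}{1+\delta s^2}(su_\delta'+2)$ is bounded by $C|\delta|$ for $|\delta|$ small, with $C$ depending only on $N$, $r_0$, and $M$. Writing $w:=u_\delta-u_0$ and combining these bounds with the weight identity above, I obtain on $[0,r_0]$
\[
|w(r)|+|w'(r)|\le C_1\int_0^r\bigl(|w(s)|+|w'(s)|\bigr)\,ds+C_2|\delta|,
\]
where $C_1,C_2$ depend only on $N$, $r_0$, and $M$. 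Gronwall's inequality then gives $\|w\|_{C^1[0,r_0]}\le C_2|\delta|e^{C_1 r_0}$, so choosing $\delta_0$ with $C_2\delta_0 e^{C_1 r_0}<\e$ proves the lemma.

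It remains to justify the uniform $C^1$-bound $M$. For $u_0$ this is immediate since $u_0$ is a fixed solution on $[0,r_0]$. For $u_\delta$ I would argue by a standard continuation/bootstrap argument: local existence at $r=0$ follows from the integral formulation by contraction (again using the weight bound $r_0/(N-1)$), and as long as $\|u_\delta\|_{C^1}\le M+1$ the Gronwall estimate just derived actually shows $\|u_\delta-u_0\|_{C^1}\le C_2|\delta|e^{C_1 r_0}<1$ for $|\delta|$ small, strictly improving the a priori bound and forcing $u_\delta$ to exist on the whole of $[0,r_0]$.

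The main obstacle is purely the apparent singularity $r^{-(N-2)}$ at $r=0$ in the representation of $u'$; as noted, this is neutralized by the identity $r^{-(N-2)}\int_0^r s^{N-2}\,ds=r/(N-1)$, after which the argument is a routine Gronwall-based continuous dependence estimate together with a bootstrap to promote local existence of $u_\delta$ to the full interval $[0,r_0]$.
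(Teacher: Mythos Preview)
Your argument is correct, and it takes a genuinely different route from the paper's proof. The paper rewrites (\ref{S3L2E1}) as a fixed-point equation $u=\calF(u,\delta)$ in $C^0[0,r_0]$, shows that the linearization $I-D_u\calF(u_0,0)$ is invertible via the Fredholm alternative (compactness of $D_u\calF$ plus triviality of the kernel, which is the linearized ODE with zero Cauchy data), and then invokes the implicit function theorem to produce $u_\delta$ together with $C^0$-continuity in $\delta$; a separate estimate on the integral formula for $u'$ upgrades this to $C^1$. You instead work directly with the integral representation of $u'$, subtract, and close a Gronwall loop, using a continuation/bootstrap to secure the a priori bound and existence of $u_\delta$ on all of $[0,r_0]$.

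Your approach is more elementary---no functional analysis, no Fredholm alternative---and it gives the bonus of a quantitative rate $\|u_\delta-u_0\|_{C^1}=O(|\delta|)$, which the implicit function theorem argument in the paper also yields implicitly but does not state. The paper's method, on the other hand, packages existence, local uniqueness near $u_0$, and continuous dependence into a single application of a standard theorem, and generalizes more readily if one wanted smoother dependence in $\delta$. Both proofs hinge on the same elementary observation that the weight $r^{-(N-2)}$ is harmless after integration against $s^{N-2}$.
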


\begin{proof}
We define $\calF(u,\delta)$ and $\calG(u,\delta)$ by
\begin{align*}
\calF(u(r),\delta)&:=\int_0^r\left(\frac{1}{s^{N-2}}\int_0^s\calG(u(\tau),\delta)d\tau+\frac{2(N-3)\delta s}{1+\delta s^2}u(s)\right)ds\quad\textrm{and}\\
\calG(u(r),\delta)&:=-8(N-2)r^{N-2}e^{u(r)}-2(N-3)\delta\frac{N-1+(N-3)\delta r^2}{(1+\delta r^2)^2}r^{N-2}u(r)\\
&\quad+\frac{4(N-3)\delta}{1+\delta r^2}r^{N-2},
\end{align*}
respectively. First, we show that if $u\in C^0[0,r_0]$ satisfies
\begin{equation}\label{S3L3E1}
u(r)=\calF(u(r),\delta)\ \ \textrm{for}\ \ r\in [0,r_0],
\end{equation}
then $u$ is a solution of (\ref{S3L2E1}).
Let $u\in C^0[0,r_0]$ be a function such that (\ref{S3L3E1}) holds.
We immediately see that $u(0)=0$.
Since $\calF(u(r),\delta)\in C^1(0,r_0]$, $u\in C^1(0,r_0]$.
Differentiating (\ref{S3L3E1}) with respect to $r$, we have
\begin{equation}\label{S3L3E2}
u'(r)=\frac{1}{r^{N-2}}\int_0^r\calG (u(\tau),\delta)d\tau+\frac{2(N-3)\delta r}{1+\delta r^2}u(r).
\end{equation}
Since the right-hand side of (\ref{S3L3E2}) is in $C^1(0,r_0]$, $u\in C^2(0,r_0]$.
Multiplying (\ref{S3L3E2}) by $r^{N-2}$ and differentiating it with respect to $r$, we have
\begin{align*}
(r^{N-2}u')'&=\calG (u(r),\delta)+\left(\frac{2(N-3)\delta r^{N-1}}{1+\delta r^2}u(r)\right) ' \\
&=-8(N-2)r^{N-2}e^u+\frac{2(N-3)\delta}{1+\delta r^2}r^{N-2}(ru'(r)+2).
\end{align*}
Thus, $u$ satisfies the equation in (\ref{S3L2E1}).
There is $C>0$ such that $|\calG(u(\tau),\delta)|\le C\tau^{N-2}$ and $\left|\frac{2(N-3)\delta r}{1+\delta r^2}u(r)\right|\le Cr$,
we have
%\begin{align*}
\[
\left|\frac{1}{r^{N-2}}\int_0^r\calG(u(\tau),\delta)d\tau+\frac{2(N-3)\delta r}{1+\delta r^2}u(r)\right|\le C\left(\frac{1}{N-1}+1\right)r\rightarrow 0\ \ (r\downarrow 0).
\]
%\end{align*}
Therefore, by (\ref{S3L3E2}) we see that $\lim_{r\downarrow 0}u'(r)=0$.
On the other hand, by L'Hospital's rule we have
\[
u'(0)=\lim_{r\downarrow 0}\frac{u(r)-0}{r}=\lim_{r\downarrow 0}\frac{u'(r)}{1}=0.
\]
Since $u'(0)=0=\lim_{r\downarrow 0}u'(r)$, $u\in C^1[0,r_0]$.
Since $u\in C^2(0,r_0]\cap C^1[0,r_0]$, $u$ is the solution of (\ref{S3L2E1}).

Let $\calH(u,\delta):=u-\calF(u,\delta)$.
We study the solution of the problem
\begin{equation}\label{S3L3E2+}
\calH(u,0)=0\ \ \textrm{in}\ \ C^0[0,r_0].
\end{equation}
Then (\ref{S3L3E2+}) is equivalent to (\ref{S3L2E1}) with $\delta=0$.
Let $U(R):=u(r)+\log 8(N-2)$ and $R:=r$.
The problem (\ref{S3L2E1}) with $\delta=0$ is equivalent to the problem (\ref{S2E3}) with $\bar{\alpha}=\log 8(N-2)$.
It is well known that (\ref{S2E3}) has the unique solution $U(R)$ which defined in $R\ge 0$.
Hence, (\ref{S3L3E2+}) has the unique solution $u_0(r)\in C^0[0,r_0]$.
We have
\begin{equation}\label{S3L3E4}
\calH(u_0,0)=0\ \ \textrm{in}\ \ C^0[0,r_0].
\end{equation}

We consider the linearized problem
\[
D_u\calH(u_0(r),0)[\phi(r)]=\phi(r)-D_u\calF(u_0(r),0)[\phi(r)]=0,
\]
where $\phi\in C^0[0,r_0]$.
Since
\[
D_u\calF(u_0,0)[\phi]=\int_0^r\frac{1}{s^{N-1}}\int_0^s(-8(N-2)\tau^{N-2}e^{u_0(\tau)}\phi(\tau) )d\tau ds,
\]
we can by a similar argument show that $\phi\in C^2(0,r_0]\cap C^1[0,r_0]$ and $\phi$ is the solution of the problem
\begin{equation}\label{S3L3E5}
\begin{cases}
\phi''+\frac{N-2}{r}\phi'+8(N-2)e^{u_0}\phi=0, & 0<r\le r_0,\\
\phi(0)=0,\\
\phi'(0)=0.
\end{cases}
\end{equation}
Because of the uniqueness of the solution of (\ref{S3L3E5}), $\phi(r)\equiv 0$ ($0\le r\le r_0$).
Since $D_u\calF:C^0[0,r_0]\rightarrow C^0[0,r_0]$ is compact, by the Fredholm alternative we see that
\begin{equation}\label{S3L3E6}
\textrm{the mapping}\ D_u\calH(u_0,0)[\phi]=\phi-D_u\calF(u_0,0)[\phi]\ \textrm{is invertible.}
\end{equation}

We find a solution near $u_0$.
It is clear that
\begin{equation}\label{S3L3E7}
\calH(u,\delta)\ \textrm{is $C^1$ near $(u_0,0)$.}
\end{equation}
By (\ref{S3L3E4}), (\ref{S3L3E6}), and (\ref{S3L3E7}) we apply the implicit function theorem to $\calH(u,\delta)=0$ at $(u_0,0)$.
There is a small $\delta_0>0$ and a smooth mapping $u=u_{\delta}$ such that if $|\delta|<\delta_0$, then $\calH(u_{\delta},\delta)=0$ in $C^0[0,r_0]$ and $\|u_0-u_{\delta}\|_{C^0[0,r_0]}\rightarrow 0$ as $\delta\rightarrow 0$.
%If $|\delta|$ is small, then $\|u_0-u_{\delta}\|_{C^0[0,r_0]}$ is small, hence there is a small $\eta_0>0$ such that $|\calG(u_0(r),0)-\calG(u_{\delta}(r),\delta)|\le\eta_0r^{N-2}$.
For each small $\eta_0>0$, there is $\delta>0$ such that $|\calG(u_0(r),0)-\calG(u_{\delta}(r),\delta)|\le\eta_0r^{N-2}$ for $r\in[0,r_0]$.
Using this inequality and (\ref{S3L3E2}), we have
\begin{align}
|u'_0(r)-u_{\delta}'(r)|&\le\frac{1}{r^{N-2}}\int_0^{r_0}\eta_0r^{N-2}dr+\delta\left\|\frac{2(N-3)r}{1+\delta r^2}u_{\delta}(r)\right\|_{C^0[0,r_0]}\nonumber\\
&=\frac{\eta_0r_0}{N-1}+\delta\left\|\frac{2(N-3)r}{1+\delta r^2}u_{\delta}(r)\right\|_{C^0[0,r_0]}.\label{S3L3E8}
\end{align}
The inequality (\ref{S3L3E8}) means that $\left\|u'_0-u'_{\delta}\right\|_{C^0[0,r_0]}\rightarrow 0$ as $\delta\downarrow 0$.
Thus, $\left\|u_0-u_{\delta}\right\|_{C^1[0,r_0]}\rightarrow 0$ as $\delta\downarrow 0$ and the proof is complete.
\end{proof}

\begin{lemma}\label{S3L4}
Let $(\bx(s),\by(s))$ be the solution of (\ref{S2E5}) with $\bar{\alpha}=\log 8(N-2)$, and let $(\tx(s),\ty(s))$ be the solution of (\ref{S3E3}).
For each $s_0>0$ and $\e>0$, there is $\alpha_0>0$ such that if $\alpha>\alpha_0$, then
\begin{equation}\label{S3L4E1}
\left\|\bx(\,\cdot\,)-\tx(\,\cdot\,)\right\|_{C^0(-\infty,s_0]}<\e\qquad\textrm{and}\qquad\left\|\by(\,\cdot\,)-\ty(\,\cdot\,)\right\|_{C^0(-\infty,s_0]}<\e.
\end{equation}
\end{lemma}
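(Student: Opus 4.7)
The plan is to transport the convergence statement of Lemma~\ref{S3L3} through the change of variables of Lemma~\ref{S3L2}. Concretely, I would set $\delta:=e^{-\alpha}$ and $r_0:=e^{s_0}$, so that $\alpha\to\infty$ corresponds exactly to $\delta\downarrow 0$, and the unbounded interval $s\in(-\infty,s_0]$ is mapped bijectively via $r=e^s$ onto the bounded interval $r\in(0,r_0]$. The point is that the $C^1[0,r_0]$ control given by Lemma~\ref{S3L3} near $r=0$ absorbs the behavior at $s=-\infty$.

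Next I would identify both orbits with solutions of (\ref{S3L2E1}). For $(\tx,\ty)$ this is exactly Lemma~\ref{S3L2}: with $u=u(r,\delta)$ denoting the solution of (\ref{S3L2E1}), one has $\tx(s)=u(e^s,\delta)+2s-\tilde{\kappa}_{N-1}$ and $\ty(s)=e^s u'(e^s,\delta)+2$. For $(\bx,\by)$ I would set $\delta=0$ in (\ref{S3L2E1}) to obtain $u''+\tfrac{N-2}{r}u'+8(N-2)e^u=0$ with $u(0)=u'(0)=0$, and then put $U(R):=u(R,0)+\log 8(N-2)$ to convert this into (\ref{S2E3}) with $\bar{\alpha}=\log 8(N-2)$. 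A short bookkeeping of the constants, using $\bar{\kappa}_{N-1}=\log 2(N-3)$ and $\tilde{\kappa}_{N-1}=\kappa_{N-1}-2\log 2=\log\tfrac{N-3}{4(N-2)}$, yields the analogous formulas $\bx(s)=u(e^s,0)+2s-\tilde{\kappa}_{N-1}$ and $\by(s)=e^s u'(e^s,0)+2$.

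Subtracting these formulas gives, for $s\le s_0$,
\[
|\tx(s)-\bx(s)|\le\|u(\,\cdot\,,\delta)-u(\,\cdot\,,0)\|_{C^0[0,r_0]},\qquad |\ty(s)-\by(s)|\le e^{s_0}\|u'(\,\cdot\,,\delta)-u'(\,\cdot\,,0)\|_{C^0[0,r_0]}.
\]
Given $\e>0$, I would apply Lemma~\ref{S3L3} with $\e/(1+e^{s_0})$ in place of $\e$ to obtain $\delta_0>0$ such that $|\delta|<\delta_0$ implies the right-hand sides are both less than $\e$, and then take $\alpha_0:=\log(1/\delta_0)$.

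The only mildly delicate step is the identification of $(\bx,\by)$ with $(u(\cdot,0),u'(\cdot,0))$: one must check that the asymptotic initial conditions in (\ref{S2E5}) at $s\to-\infty$ translate, through $r=e^s$, into the regular conditions $u(0)=u'(0)=0$ at $r=0$. This is entirely analogous to the asymptotic-to-regular conversion carried out in the proof of Lemma~\ref{S3L2} and reduces to tracking the logarithmic constants listed above; once this is done, the conclusion is an immediate consequence of Lemma~\ref{S3L3}.
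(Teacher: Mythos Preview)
Your proposal is correct and follows essentially the same route as the paper: both arguments set $\delta=e^{-\alpha}$, $r_0=e^{s_0}$, identify $(\tx,\ty)$ and $(\bx,\by)$ with $u(\,\cdot\,,\delta)$ and $u(\,\cdot\,,0)$ via the change of variables of Lemma~\ref{S3L2} (checking the constant identity $\bar{\kappa}_{N-1}-\log 8(N-2)=\tilde{\kappa}_{N-1}$), and then read off (\ref{S3L4E1}) from Lemma~\ref{S3L3}. The only cosmetic difference is that the paper applies Lemma~\ref{S3L3} with the pair of tolerances $\e$ and $e^{-s_0}\e$ rather than the single tolerance $\e/(1+e^{s_0})$ you use; the substance is identical.
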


\begin{proof}
Let $U(R):=\bx(s)-2s+\bar{\kappa}_{N-1}$ and $R:=e^s$.
Then $U(R)$ satisfies (\ref{S2E3}) with $\bar{\alpha}=\log 8(N-2)$.
Let $u_0(r):=U(R)-\log 8(N-2)$ and $r:=R$.
Then $u_0(r)$ satisfies (\ref{S3L2E1}) with $\delta=0$.

Let $u_{\delta}(r):=\tx(s)-2s+\tilde{\kappa}_{N-1}$ and $r:=e^s$.
Then $u_{\delta}(r)$ satisfies (\ref{S3L2E1}).
Because of Lemma~\ref{S3L3}, for small $\e>0$, there is $\delta_0>0$ such that
\begin{equation}\label{S3L4E2}
\textrm{if $|\delta|<\delta_0$, then}\ 
\left\|u_0(\,\cdot\,)-u_{\delta}(\,\cdot\,)\right\|_{C^0[0,r_0]}<\e\ \textrm{and}\ \left\|u_0'(\,\cdot\,)-u'_{\delta}(\,\cdot\,)\right\|_{C^0[0,r_0]}<e^{-s_0}\e,
\end{equation}
where $s_0:=\log r_0$.
Since $u_0(r)=\bx(s)-2s+\bar{\kappa}_{N-1}-\log 8(N-2)$ and $u_{\delta}(r)=\tx(s)-2s+\tilde{\kappa}_{N-1}$, we have
\[
\left\|u_0(\,\cdot\,)-u_{\delta}(\,\cdot\,)\right\|_{C^0(0,r_0]}=\left\|\bx(\,\cdot\,)-\tx(\,\cdot\,)\right\|_{C^0(-\infty,s_0]},
\]
where we use the equality $\bar{\kappa}_{N-1}-\log 8(N-2)-\tilde{\kappa}_{N-1}=0$.
Since $u'_0(r)=e^{-s}(\bx'(s)-2)$ and $u_{\delta}'(r)=e^{-s}(\tx'(s)-2)$,
\begin{equation}\label{S3L4E3}
\left\|u'_0(\,\cdot\,)-u'_{\delta}(\,\cdot\,)\right\|_{C^0(0,r_0]}=\left\|e^{-s}(\bx'(\,\cdot\,)-\tx'(\,\cdot\,))\right\|_{C^0(-\infty,s_0]}\ge e^{-s_0}\left\|\bx'(\,\cdot\,)-\tx'(\,\cdot\,)\right\|_{C^0(-\infty,s_0]}.
\end{equation}
Let $\alpha_0:=-\log\delta_0$.
Combining (\ref{S3L4E3}) and (\ref{S3L4E2}), we see that (\ref{S3L4E1}) holds for $\alpha>\alpha_0$.
%Since $\delta=e^{-\alpha}$, by (\ref{S3L4E2}) we see that if $\alpha>-\log\delta_0$, then (\ref{S3L4E1}) holds.
\end{proof}

%Hereafter, we define $\bar{\alpha}:=\log 8(N-2)$.
Hereafter, by $(\tx(s,\alpha),\ty(s,\alpha))$ we denote the solution of (\ref{S3E3}).
By $(\bx(s),\by(s))$ we denote the solution of (\ref{S2E5}) with $\bar{\alpha}:=\log 8(N-2)$.
Lemma~\ref{S3L4} says that for each fixed $s_0\in\R$, $(\tx(s,\alpha),\ty(s,\alpha))$ is uniformly close to $(\bx(s),\by(s))$ for $s\in(-\infty,s_0]$ if $\alpha$ is large.
Since $(\bx(s),\by(s))$ rotates around the origin infinitely many times (Proposition~\ref{S2P1}), we expect that there are infinitely many $\alpha\in\R$ such that $\ty(\frac{\alpha}{2},\alpha)=0$.
Since $y(0,\alpha)=\ty(\frac{\alpha}{2},\alpha)=0$, $u(r)$ corresponding to $(\tx(s,\alpha),\ty(s,\alpha))$ is the solution of (\ref{S1E2}).
%the orbit $(\tx(s,\alpha),\ty(s,\alpha))$ corresponds to the solution of that of (\ref{S1E2}).
To prove the existence of such $\alpha$ we use the \lq\lq half winding numbers" of the two orbits $\{(\tx(s,\alpha),\ty(s,\alpha))\}$ and $\{(\bx(s),\by(s))\}$.
We define $\tilde{W}_I(\alpha)$ and $\bar{W}_I$ by
\[
\tilde{W}_I(\alpha):=\sharp\{ s\in I;\ \ty(s,\alpha)=0\},
\]
\[
\bar{W}_I:=\sharp\{ s\in I;\ \by(s)=0\},
\]
where $I\subset\R$ is an interval.
For example, it is clear that $\bar{W}_{(-\infty,s_1]}<\infty$ if $|s_1|<\infty$. Proposition~\ref{S2P1} says that $\bar{W}_{(-\infty,\infty)}=\infty$.

\begin{lemma}\label{S3L5}
Let $(\tx(s,\alpha),\ty(s,\alpha))$ be the solution of (\ref{S3E3}).
There is a sequence $\{\alpha_j\}_{j=1}^{\infty}$ $(\alpha_1<\alpha_2<\cdots\rightarrow +\infty)$ such that
\begin{equation}\label{S3L5E0-}
\tW_{(-\infty,\frac{\alpha_1}{2}]}(\alpha_1)<\tW_{(-\infty,\frac{\alpha_2}{2}]}(\alpha_2)<\cdots\rightarrow\infty
\end{equation}
and
\[
\ty(\frac{\alpha_j}{2},\alpha_j)=0\ \ \textrm{for}\ \ j\in\{1,2,\ldots\}.
\]
\end{lemma}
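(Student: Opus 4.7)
The plan is to find the $\alpha_j$'s by applying the intermediate value theorem to the continuous map $\alpha \mapsto \ty(\alpha/2, \alpha)$, using Lemma~\ref{S3L4} to transfer sign information from the limit orbit $\by(\cdot)$ to $\ty(\cdot, \alpha)$ for large $\alpha$. Continuity of this map in $\alpha$ follows from continuous dependence on parameters, since $\alpha$ enters the system (\ref{S3E3}) smoothly through $\tanh(s-\alpha/2)$ and the asymptotic data at $s = -\infty$ are independent of $\alpha$.

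By Proposition~\ref{S2P1} there is a strictly increasing sequence $\{s_j\} \to \infty$ with $\by(s_j) = 0$. The second equation in (\ref{S2E5}) gives $\by'(s_j) = -2(N-3)(e^{\bx(s_j)}-1)$, which is nonzero by (\ref{S2P1E0}), so each zero is transversal. Thus I can choose positive $\epsilon_j$, with the intervals $[s_j-\epsilon_j, s_j+\epsilon_j]$ pairwise disjoint, so that $\by(s_j-\epsilon_j)$ and $\by(s_j+\epsilon_j)$ have opposite nonzero signs. Setting $\eta_k := \min_{1 \le j \le k}\min\{|\by(s_j-\epsilon_j)|, |\by(s_j+\epsilon_j)|\}$ and applying Lemma~\ref{S3L4} with $s_0 = s_k+\epsilon_k$ and tolerance $\eta_k/2$ produces $\alpha_0(k) > 0$ such that whenever $\alpha > \alpha_0(k)$, $\ty(s_j \pm \epsilon_j, \alpha)$ has the same sign as $\by(s_j \pm \epsilon_j)$ for every $j \le k$.

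For $k$ large enough that $2(s_k - \epsilon_k) > \alpha_0(k)$, the continuous function $\alpha \mapsto \ty(\alpha/2, \alpha)$ takes opposite signs at the two endpoints of $[2(s_k-\epsilon_k), 2(s_k+\epsilon_k)]$, so the intermediate value theorem yields $\alpha_k$ in this interval with $\ty(\alpha_k/2, \alpha_k) = 0$. The same closeness of $\ty(\cdot, \alpha_k)$ to $\by(\cdot)$ forces $\ty(\cdot, \alpha_k)$ to take opposite signs at the endpoints of each interval $[s_j-\epsilon_j, s_j+\epsilon_j]$ with $j < k$, producing one zero in each by IVT in $s$; together with the zero at $\alpha_k/2 \in [s_k-\epsilon_k, s_k+\epsilon_k]$, this gives at least $k$ distinct zeros of $\ty(\cdot, \alpha_k)$ in $(-\infty, \alpha_k/2]$, so $\tW_{(-\infty, \alpha_k/2]}(\alpha_k) \ge k$. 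Disjointness of the $\alpha$-intervals yields $\alpha_1 < \alpha_2 < \cdots \to \infty$, and a subsequence if necessary makes the winding numbers strictly increasing.

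The hardest step is arranging $2(s_k - \epsilon_k) > \alpha_0(k)$ for infinitely many $k$. This requires a delicate balance: $\eta_k$ must shrink in order to distinguish the sign of $\by$ as the limit orbit spirals toward the origin, which drives $\alpha_0(k)$ upward via Lemma~\ref{S3L4}, while $s_k$ must grow fast enough to stay ahead. Verifying this quantitatively ultimately relies on the uniform control near $s = -\infty$ built into (\ref{S3E3}).
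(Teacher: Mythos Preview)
Your approach has a genuine gap at precisely the point you flag in the last paragraph. The requirement $2(s_k - \epsilon_k) > \alpha_0(k)$ is never verified, and Lemma~\ref{S3L4} gives no quantitative control over how $\alpha_0$ depends on the interval endpoint $s_0$ and the tolerance $\e$. As $k \to \infty$, your tolerance $\eta_k$ must shrink at least as fast as $|\by(s_k \pm \epsilon_k)|$, which decays like $e^{-\frac{N-3}{2}\, s_k}$ as the limit orbit spirals into the origin, while $s_k$ itself grows only linearly in $k$. There is no a priori reason for $\alpha_0(k)$, which must compensate simultaneously for a longer comparison interval $(-\infty, s_k+\epsilon_k]$ and an exponentially smaller tolerance, to stay below $2s_k$. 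Your closing sentence appeals to ``uniform control near $s = -\infty$'', but the obstruction lies at large $s$, where the orbit is close to the equilibrium; the asymptotic conditions in (\ref{S3E3}) say nothing about that regime.

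The paper sidesteps this competition entirely by reversing the logic. Rather than pinning each $\alpha_k$ near $2s_k$, it establishes two qualitative facts: (i) $\tW_{(-\infty,\alpha/2]}(\alpha)$ is \emph{finite} for each large $\alpha$, by using the Lyapunov function $E(x,y)$ of (\ref{S2P1E1--}) to trap the orbit in a bounded region and then noting that a bounded solution of a linear second-order ODE with bounded coefficients has only finitely many critical points on a compact interval; and (ii) $\tW_{(-\infty,\alpha/2]}(\alpha) \to \infty$ as $\alpha \to \infty$, since for any fixed $s_0$ one may take $\alpha$ so large that $(\tx,\ty)$ inherits all the axis crossings of $(\bx,\by)$ up to $s_0$, and the inequality $\alpha/2 > s_0$ then comes for free. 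Because $(\tx(\alpha/2,\alpha),\ty(\alpha/2,\alpha))$ depends continuously on $\alpha$ and the orbit never passes through the equilibrium $(0,0)$, the integer $\tW_{(-\infty,\alpha/2]}(\alpha)$ can change only when $\ty(\alpha/2,\alpha)=0$; combining (i) and (ii) yields infinitely many such $\alpha$ without any quantitative race between $s_k$ and $\alpha_0(k)$.
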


\begin{proof}
First, we show that if $\alpha$ is large, then 
\begin{equation}\label{S3L5E0}
\tW_{(-\infty,\frac{\alpha}{2}]}(\alpha)<\infty.
\end{equation}
Since $(0,0)$ is the equilibrium of the vector field defined by the first order system in (\ref{S3E3}), we see that $(\tx(s,\alpha),\ty(s,\alpha))\neq (0,0)$ for $s\in\R$.
If $(\tx,\ty)$ is on the $x$-axis, then by (\ref{S3E3}) we have
\[
\begin{cases}
\tx'=0\\
\ty'=-2(N-3)(e^{\tx}-1).
\end{cases}
\]
Therefore, $\ty'\neq 0$, since $\tx\neq 0$.
This means that the orbit does not stay on the $x$-axis when it crosses the $x$-axis and that $\tW_{(-\infty,s]}(\alpha)$ increases by one whenever the orbit crosses the $x$-axis.
Let $s_0\in\R$ be fixed.
We take a large $\alpha>0$ such that $s_0<\frac{\alpha}{2}$.
The orbit $(\bx(s),\by(s))$ ($-\infty<s<s_0$) rotates around the origin finitely many times.
Because of Lemma~\ref{S3L4}, $(\tx(s,\alpha),\ty(s,\alpha))$ is uniformly close to $(\bx(s),\by(s))$ for $s\in (-\infty,s_0)$, and hence $(\tx(s,\alpha),\ty(s,\alpha))$ $(-\infty<s<s_0)$ rotates around the origin finitely many times.
We see that $\tW_{(-\infty,s_0)}(\alpha)<\infty$.
We study the behavior of the orbit $(\tx(s,\alpha),\ty(s,\alpha))$ in the interval $[s_0,\frac{\alpha}{2}]$.
Let $E(x,y)$ be as defined by (\ref{S2P1E1--}). Then, for $s\le\frac{\alpha}{2}$,
\begin{align*}
\frac{d}{ds}E(\tx(s,\alpha),\ty(s,\alpha))&=\ty(\ty'+2(N-3)(e^{\tx}-1))\\
&=(N-3)\tanh\left(s-\frac{\alpha}{2}\right)\ty^2\\
&\le 0.
\end{align*}
Thus, $(\tx(s,\alpha),\ty(s,\alpha))$ $(s_0\le s\le\frac{\alpha}{2})$ is in the bounded set $\Omega_{c_0}\subset\R^2$, where $\Omega_c$ is defined by (\ref{S2P1E1+}) and $c_0:=E(\tx(s_0,\alpha),\ty(s_0,\alpha))$.
In particular, $\tx(s,\alpha)$ $(s_0\le s\le\frac{\alpha}{2})$ is bounded.
On the other hand, $\tx(s,\alpha)$ satisfies the following linear ODE of the second order:
\[
\tx''-(N-3)\tanh\left(s-\frac{\alpha}{2}\right)\tx'+2(N-3)V(\tx)\tx=0,
\]
where
\[
V(\tx):=
\begin{cases}
\frac{e^{\tx}-1}{\tx} & \textrm{if}\ \tx\neq 0,\\
1 & \textrm{if}\ \tx=0.
\end{cases}
\]
%Since the interval $[s_0,\frac{\alpha}{2}]$ is compact, $\tx(s,\alpha)$ has at most finitely many critical points in $[s_0,\frac{\alpha}{2}]$.
We show by contradiction that $\tx(s,\alpha)$ has at most finitely many critical points in $[s_0,\frac{\alpha}{2}]$.
Suppose the contrary.
Since $[s_0,\frac{\alpha}{2}]$ is compact, the set of the critical points has an accumulation point $s_1$.
Because of the continuity of $\tx'(s)$, $\tx'(s_1)=0$.
If $\tx(s_1)\neq 0$, then $\tx''(s_1)=-2(N-3)V(\tx(s_1))\tx(s_1)\neq 0$, and hence $s_1$ is a simple zero of $\tx'(s)$.
Therefore, $\tx'(s)\neq 0$ near $s=s_1$ except $s_1$, which contradicts that $\tx'(s)$ has infinitely many zeros near $s_1$.
Thus, $\tx(s_1)=0$.
Because $\tx(s_1)=\tx'(s_1)=0$, it follows from the uniqueness of the solution of the second order ODE that $\tx(s)\equiv 0$.
We obtain the contradiction.
Since $\tx(s,\alpha)$ has at most finitely many critical points in $[s_0,\frac{\alpha}{2}]$, $\ty(s,\alpha)$ also has at most finitely many zeros in $[s_0,\frac{\alpha}{2}]$.
Thus, $\tW_{[s_0,\frac{\alpha}{2}]}<\infty$.
Since $\tW_{(-\infty,\frac{\alpha}{2}]}(\alpha)=\tW_{(-\infty,s_0)}(\alpha)+\tW_{[s_0,\frac{\alpha}{2}]}(\alpha)$, $\tW_{(-\infty,\frac{\alpha}{2}]}(\alpha)<\infty$.

Second, we show that 
\begin{equation}\label{S3L5E2}
\tW_{(-\infty,\frac{\alpha}{2}]}(\alpha)\rightarrow\infty\ \ \textrm{as}\ \ \alpha\rightarrow\infty.
\end{equation}
%We take large $\alpha>0$.
%By the same argument as above we see the following:
Since $\bar{W}_{(-\infty,\infty)}=\infty$,
%For each large $M>0$, there are a large $s_0\in\R$ and a large $\alpha_0(>2s_0)$ such that if $\alpha>\alpha_0$, then $\tW_{(-\infty,s_0)}(\alpha)>M$.
for each large $M>0$, there is a large $s_0\in\R$ such that $\bW_{(-\infty,s_0)}\ge M+1$.
If $\alpha>0$ is large, then it follows from Lemma~\ref{S3L4} that $(\tx(s),\ty(s))$ is uniformly close to $(\bx(s),\by(s))$ in $(-\infty,s_0)$.
Since each zero of $\by(s)$ is simple, $\ty(s)$ also has a zero near a zero of $\by(s)$.
Thus, $\tW_{(-\infty,s_0)}\ge M$ for large $\alpha$, because $\bW_{(-\infty,s_0)}\ge M+1$.
Since $\tW_{(-\infty,\frac{\alpha}{2}]}(\alpha)\ge\tW_{(-\infty,s_0)}(\alpha)$ and $M$ can be chosen arbitrarily large, (\ref{S3L5E2}) holds.

It is clear that $(\tx(\frac{\alpha}{2},\alpha),\ty(\frac{\alpha}{2},\alpha))$ is continuous in $\alpha$.
Because of (\ref{S3L5E2}), (\ref{S3L5E0}), and this continuity, there is a sequence $\{\alpha_j\}_{j=1}^{\infty}$ $(\alpha_1<\alpha_2<\cdots\rightarrow\infty)$ such that $\ty(\frac{\alpha_j}{2},\alpha_j)=0$.
We can choose a subsequence, which is still denoted by $\{\alpha_j\}_{j=1}^{\infty}$, such that (\ref{S3L5E0-}) holds, because of (\ref{S3L5E2}) and (\ref{S3L5E0}).
The proof is complete.
\end{proof}

We are in a position to prove Theorem~\ref{B}.
\begin{proof}[Proof of Theorem~\ref{B}]
Let $\{\alpha_j\}_{j=1}^{\infty}$ be a sequence given in Lemma~\ref{S3L5}, and let $(\tx(s,\alpha),\ty(s,\alpha))$ be the solution of (\ref{S3E3}).
We let
\[
x(t,\alpha_j):=\tx(s,\alpha_j),\ \ y(t,\alpha_j):=\ty(s,\alpha_j),\quad\textrm{and}\quad t:=s-\frac{\alpha_j}{2}.
\]
Then $(x(t,\alpha_j),y(t,\alpha_j))$ is a solution of (\ref{S3L0E2}) and $x'(0,\alpha_j)=y(0,\alpha_j)=0$.
Let $v(\theta,\alpha_j)$ be defined by (\ref{S1E3+}) with $x(t)=x(t,\alpha_j)$.
Since $v'(\theta,\alpha_j)=\cosh(t)x'(t,\alpha_j)-2\sinh(t)/\cosh(t)$, $v'(\frac{\pi}{2},\alpha_j)=0$, and hence $v(\theta,\alpha_j)$ satisfies (\ref{S1E2}).
We define $\tilde{v}(\theta,\alpha_j)$ by
\[
\tilde{v}(\theta,\alpha_j):=
\begin{cases}
v(\theta,\alpha_j) & 0\le\theta\le\frac{\pi}{2},\\
v(\pi-\theta,\alpha_j) & \frac{\pi}{2}<\theta\le\pi.
\end{cases}
\]
Then $\tilde{v}(\theta,\alpha_j)$ $(j\in\{1,2,\ldots\})$ is a classical solution of (\ref{S1E1}).
The proof is complete.
\end{proof}

% **********************************************************
% **********************************************************
% **********************************************************
% Section 4
% **********************************************************
% **********************************************************
% **********************************************************
\section{Infinitely many radial solutions of (\ref{YP})}
In this section we briefly prove the following:
\begin{proposition}[{\cite[Theorem 1.1]{DGW12}}]
If (\ref{DGWC}) holds, then (\ref{YP}) has infinitely many positive radial solutions. Therefore, (\ref{LE}) has infinitely many singular positive nonradial solutions.
\end{proposition}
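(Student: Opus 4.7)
The plan is to imitate Section~3, replacing the exponential nonlinearity by $v^p$. Restricting to axially symmetric positive solutions $v(\theta)$ of (\ref{YP}) with $v(\theta)=v(\pi-\theta)$, one obtains the ODE
\[
v''+(N-2)\frac{\cos\theta}{\sin\theta}v'-\mu v+v^p=0,\quad 0<\theta<\frac{\pi}{2},\qquad v'(0)=v'\!\left(\frac{\pi}{2}\right)=0,\qquad v>0,
\]
with $\mu=\gamma(N-2-\gamma)$ and $\gamma=2/(p-1)$. The condition $p>(N+1)/(N-3)$ gives $\gamma<N-3$, and a direct substitution produces the explicit singular solution
\[
v^*(\theta):=c_*(\sin\theta)^{-\gamma},\qquad c_*:=\bigl[\gamma(N-3-\gamma)\bigr]^{1/(p-1)},
\]
playing the role of (\ref{S1E3}). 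The analogue of (\ref{S1E3+}) is the multiplicative substitution $w(t):=v(\theta)/v^*(\theta)$ with $t:=\log\tan(\theta/2)$, which turns the equation into
\[
\ddot w-(N-3-2\gamma)\tanh(t)\dot w+\gamma(N-3-\gamma)(w^p-w)=0,\qquad -\infty<t<0,
\]
with $\dot w(0)=0$ and the leading behaviour $w(t)\sim(2^\gamma v(0)/c_*)e^{\gamma t}$ as $t\to-\infty$.

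Next, I would normalize by a shift: set $\tilde w(s):=w(s-B_\alpha)$ with $B_\alpha:=\gamma^{-1}\log(2^\gamma\alpha/c_*)$ and $\alpha:=v(0)$, so that $\tilde w(s)\sim e^{\gamma s}$ as $s\to-\infty$ uniformly in $\alpha$. As $\alpha\to\infty$, $B_\alpha\to\infty$ and $\tanh(s-B_\alpha)\to-1$ locally uniformly in $s$, leading to the autonomous limit equation
\[
\ddot{\tilde w}+(N-3-2\gamma)\dot{\tilde w}+\gamma(N-3-\gamma)(\tilde w^p-\tilde w)=0,\qquad \tilde w(s)\sim e^{\gamma s}\ \ \textrm{as}\ \ s\to-\infty.
\]
Under the scaling $\tilde w=\phi/c_*$ with $U(R):=R^{-\gamma}\phi(\log R)$, this is exactly the Emden--Fowler form of the unique positive regular radial solution of $\Delta U+U^p=0$ in $\R^{N-1}$, taking the role of Proposition~\ref{S2P1}. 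The equilibria of the autonomous ODE are $\tilde w=0$ and $\tilde w=1$, and an elementary computation of the characteristic polynomial at $(1,0)$ shows, by the very algebra defining $p_{JL}(N-1)$, that $p<p_{JL}(N-1)$ is precisely the condition for $(1,0)$ to be a stable spiral; in that regime the limit orbit winds around $(1,0)$ infinitely many times.

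With these ingredients in place, the rest is a direct transcription of Lemmas~\ref{S3L3}--\ref{S3L5}. One rewrites the $\alpha$-dependent and the limit problems as fixed-point equations in $r=e^s$ with $\delta:=e^{-\alpha}$, exactly as in (\ref{S3L2E1}); the $\delta=0$ problem has a unique $C^1$ solution whose linearization (at the regular Lane--Emden profile in $\R^{N-1}$) is injective, and the implicit function theorem yields $C^1$-convergence on compact $r$-intervals as $\delta\downarrow 0$. The Lyapunov function
\[
E(\tilde w,\dot{\tilde w}):=\frac{1}{2}\dot{\tilde w}^2+\gamma(N-3-\gamma)\left(\frac{\tilde w^{p+1}}{p+1}-\frac{\tilde w^2}{2}\right)
\]
is nonincreasing along the perturbed orbit on $(-\infty,B_\alpha]$ since $\tanh(s-B_\alpha)\le 0$ there, giving a uniform-in-$\alpha$ bound that confines the orbit to a compact subset of $\{\tilde w>0\}$. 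A winding-number argument as in Lemma~\ref{S3L5} then produces a sequence $\alpha_j\to\infty$ with $\dot{\tilde w}(B_{\alpha_j};\alpha_j)=0$ and strictly increasing half-winding numbers on $(-\infty,B_{\alpha_j}]$. Each such $\alpha_j$ yields a positive symmetric solution of (\ref{YP}) after even reflection about $\theta=\pi/2$, and the ansatz $U(R,\sigma)=R^{-2/(p-1)}V(\sigma)$ produces a positive nonradial singular solution of (\ref{LE}).

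The only genuinely new difficulty is the positivity constraint $\tilde w>0$, which is needed to interpret $\tilde w^p$ and to ensure $v>0$. The limit orbit is globally positive because it spirals into $(1,0)\in\{\tilde w>0\}$, and the Lyapunov function above gives a quantitative lower bound on compact $s$-intervals; the perturbation argument then preserves positivity of the $\alpha$-dependent orbit for all sufficiently large $\alpha$. A secondary bookkeeping point is that $\gamma$ now enters both the coefficient $N-3-2\gamma$ and the nonlinear strength $\gamma(N-3-\gamma)$, so the identification of the spiral regime with $p<p_{JL}(N-1)$ requires an explicit discriminant computation rather than the essentially trivial one in the exponential case.
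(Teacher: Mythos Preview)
Your proposal is correct and follows essentially the same route as the paper's Section~4 proof: the same singular solution $v^*(\theta)=c_*(\sin\theta)^{-\gamma}$, the same multiplicative change of variables $w=v/v^*$ with time $\log\tan(\theta/2)$ (the paper inserts an extra harmless scale factor $1/m$), the same normalizing shift so that the $\alpha\to\infty$ limit is the radial Lane--Emden flow in $\R^{N-1}$, the same spiral criterion at the equilibrium $(1,0)$, and the same Lyapunov/winding-number argument transcribed from Lemmas~\ref{S3L3}--\ref{S3L5}. Two small slips worth fixing: since your shift satisfies $B_\alpha\sim\gamma^{-1}\log\alpha$, the correct perturbation parameter is a negative \emph{power} of $\alpha$ (the paper takes $\delta=\alpha^{-4/q}$), not $\delta=e^{-\alpha}$; and positivity is obtained most cleanly, as in the paper, from the global observation that $E(0,\dot{\tilde w})=\tfrac12\dot{\tilde w}^2\ge 0$ while the orbit stays in $\{E<0\}$, rather than by perturbation from the limit orbit (which only controls $(-\infty,s_0]$, not all of $(-\infty,B_\alpha]$).
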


\begin{proof}
In order to find radial solutions of (\ref{YP}) we study
\begin{equation}\label{S4E1}
\begin{cases}
v''+(N-2)\frac{\cos\theta}{\sin\theta}v'-\mu v+v^p=0, & 0<\theta<\frac{\pi}{2},\\
v(0)=\alpha,\\
v'(0)=0.
\end{cases}
\end{equation}
We find $\alpha>0$ such that $v'(\frac{\pi}{2})=0$.
We define
\[
q:=\frac{2}{p-1},\ \ A:=\{q(N-3-q)\}^{\frac{1}{p-1}},\quad\textrm{and}\quad m:=A^{-\frac{p-1}{2}}.
\]
By direct calculation we see that $v^*(\theta):=A\sin^{-q}\theta$ is a singular solution of the equation in (\ref{S4E1}).
Using the transformation
\[
x(t):=\frac{v(\theta)}{v^*(\theta)}\quad\textrm{and}\quad t:=\frac{1}{m}\log\tan\frac{\theta}{2},
\]
we have
%\begin{equation}\label{S4E1+}
\[
\begin{cases}
x''-(N-3-2q)m\tanh(mt)x'-x+x^p=0, & -\infty<t<0,\\
x(t)A\cosh^q(mt)\rightarrow\alpha\ \ \textrm{as}\ \ t\rightarrow -\infty,\\
\cosh(mt)\frac{d}{dt}\left\{ x(t)A\cosh^q(mt)\right\}\rightarrow 0\ \ \textrm{as}\ \ t\rightarrow -\infty.
\end{cases}
\]
%\end{equtaion}
We use the change of variables
\[
\tilde{x}(s):=x(t)\quad\textrm{and}\quad s:=t+\frac{1}{mq}\log\alpha
\]
which corresponding to (\ref{S3L1E1}).
We have
\begin{equation}\label{S4E2}
\begin{cases}
\tx'=\ty,\\
\ty'=(N-3-2q)m\tanh(ms-\frac{\log\alpha}{q})\tx'+\tx-\tx^p,\\
\tx(s)A\cosh^q(ms)\rightarrow 1\ \ \textrm{as}\ \ s\rightarrow -\infty,\\
\cosh(ms)\frac{d}{ds}\left\{ \tx(s)A\cosh^q(ms)\right\}\rightarrow 0\ \ \textrm{as}\ \ s\rightarrow -\infty.
\end{cases}
\end{equation}
We let $\delta:=\alpha^{-\frac{4}{q}}$.
We consider the case where $\alpha$ is large.
Then $\delta>0$ is small.
Let
\[
\frac{u(r)}{u^*(r)}=\tx(s),\ \ r=e^{ms},\quad\textrm{and}\quad u^*(r):=Ar^{-q}.
\]
Then by (\ref{S4E2}) we have
\[
\begin{cases}
u''+\frac{N-2}{r}u'+u^p-\frac{2(N-3-2q)\delta}{1+\delta r^2}(ru'+qu)=0,\\
u(0)=1,\\
u'(0)=0.
\end{cases}
\]
Note that the limit equation as $\delta\rightarrow 0$ is $u''+\frac{N-2}{r}u'+u^p=0$.
Using the same argument as in Lemmas~\ref{S3L3} and \ref{S3L4}, we can show that for each $s_0\in\R$ if $\alpha$ is large, then the orbit $(\tx(s),\ty(s))$ of (\ref{S4E2}) is close to the orbit $(\bx(s),\by(s))$ of
\begin{equation}\label{S4E3}
\begin{cases}
\bx'=\by,\\
\by'=-(N-3-2q)m\by+\bx-\bx^p,\\
\bx(s)A\cosh^q(ms)\rightarrow 1\ \ \textrm{as}\ \ s\rightarrow -\infty,\\
\cosh(ms)\frac{d}{ds}\left\{ \bx(s)A\cosh^q(ms)\right\}\rightarrow 0\ \ \textrm{as}\ \ s\rightarrow -\infty.
\end{cases}
\end{equation}
in the interval $(-\infty,s_0]$.
It is well known that the orbit $(\bx(s),\by(s))$ is a heteroclinic orbit between the two equilibria $(0,0)$ and $(1,0)$ under the condition where $N-3-2q>0$.
The linearization of (\ref{S4E3}) at $(1,0)$ is
\[
\left(
\begin{array}{cc}
0 & 1\\
1-p & -(N-3-2q)m
\end{array}
\right).
\]
The two eigenvalues of the matrix are complex if $(N-3-2q)^2m^2-4(p-1)<0$.
In this case $\frac{1}{2}(N-5-\sqrt{N-2})<q<\frac{N-3}{2}$ which is equivalent to (\ref{DGWC}).
Thus if (\ref{DGWC}) holds, then $(1,0)$ becomes a spiral.
Using the same argument as in Lemma~\ref{S3L5}, we can show that there is a sequence $\{\alpha_j\}_{j=1}^{\infty}$ $(0<\alpha_1<\alpha_2<\cdots\rightarrow\infty)$ such that $\tx'(\frac{1}{mq}\log\alpha_j)=x'(0)=0$.
Since $v'(\frac{\pi}{2})=Am^{-1}x'(0)$, $v'(\frac{\pi}{2})=0$ if $\alpha=\alpha_j$.
Since $N-3-2q>0$ and $-\infty<s<\frac{1}{mq}\log\alpha$, (\ref{S4E2}) has the Lyapunov function $I(\tx,\ty)=\frac{\ty^2}{2}-\frac{\tx^2}{2}+\frac{\tx^{p+1}}{p+1}$.
We see that $(\tx(s),\ty(s))\in\{I<0\}$.
Thus, $\tx>0$ and $v(\theta)$ is positive.
We have found infinitely many positive solutions of (\ref{S4E1}) with $v'(\frac{\pi}{2})=0$.
\end{proof}

\noindent
{\bf Acknowledgment}\\
The author is grateful to Professor Toru Kan for a stimulating discussion and for bringing his attention to the problem.
He thanks to the referee for the encouraging and helpful comments.
Those improved the presentation of the paper.

% **********************************************************
% **********************************************************
% **********************************************************
% Bibliography
% **********************************************************
% **********************************************************
% **********************************************************

\end{document}